\documentclass[11pt,reqno,oneside]{amsart}

\usepackage{graphicx}
\usepackage{lscape}
\usepackage{amsfonts}
\usepackage{pdfsync}
\usepackage[toc,page]{appendix}
\usepackage{hyperref}

\vsize 27.9truecm \hsize 21.59truecm
\setlength{\oddsidemargin}{0.5cm}
\setlength{\evensidemargin}{0.0cm} \setlength{\topmargin}{-0.0cm}
\setlength{\headheight}{0.0cm}
\setlength{\textheight}{23.5cm} \setlength{\textwidth}{15.5cm}

\newtheorem*{theorem-non}{Theorem}
\newtheorem{theorem}{Theorem}[section]
\newtheorem{lemma}[theorem]{Lemma}
\newtheorem{corollary}[theorem]{Corollary}


\theoremstyle{definition}
\newtheorem{defi}[theorem]{Definition}

\newtheorem{theorem*}[theorem]{Theorem}

\def\O{\Omega}

\def\o{\omega}

\def\d{{\rm d}}

\def\diam{{\rm diam}}
\def\dx{\,{\rm d}x}
\def\dy{\,{\rm d}y}
\def\dz{\,{\rm d}z}

\def\tf{\tilde{f}}
\def\tg{\tilde{g}}
\def\tgc{\tg^{(0)}}
\def\th{\tilde{h}}
\def\thc{\th^{(0)}}
\def\thu{\th^{(1)}}
\def\tt{\tilde{\theta}}
\def\tvfi{\tilde{\varphi}}
\def\tpsi{\tilde{\psi}}

\def\tF{\tilde{F}}

\def\tnu{\tilde{\nu}}
\def\W{\mathcal{W}}
\def\V{\mathcal{V}}
\def\S{\mathcal{S}}
\def\Rnn{\R^{n\times n}}

\newcommand{\rr}{{\bf r}}
\newcommand{\uu}{{\bf u}}
\newcommand{\vv}{{\bf v}}
\newcommand{\ww}{{\bf w}}

\newcommand{\R}{{\mathbb R}}

\newcommand{\bs}{\bigskip}

\begin{document}
\date{\today}
\section*{}
\title[]{Weighted generalized Korn inequality on John domains}

\author[F. L\'opez Garc\'\i a]{Fernando L\'opez Garc\'\i a}
\address{University of California Riverside, Department of Mathematics, 
900 University Ave. Riverside (92521), CA, USA} \email{fernando.lopezgarcia@ucr.edu}

\keywords{generalized Korn inequality, conformal Korn inequality, decomposition of integrable functions, local-to-global argument, weighted Sobolev spaces, distance weights, John domains, Boman domains}

\subjclass[2010]{Primary: 26D10; Secondary: 46E35, 74B05}

\begin{abstract} The goal of this work is to show that the generalized Korn inequality that replaces the symmetric part of the differential matrix in the classical Korn inequality by its trace-free part is valid over John domains and weighted Sobolev spaces. The weights considered are nonnegative powers of the distance to the boundary. 

\end{abstract}

\maketitle

\section{Introduction}
\label{intro}
\setcounter{equation}{0}

Let $\O\subset \R^n$ be a bounded domain and $1<p<\infty$.  A version of Korn inequality, equivalent to the classical one, states that 
\begin{align}\label{ClassKorn2}
\inf_{\varepsilon(\ww)=0}\left(\int_\Omega |D\vv-D\ww|^p\, \d x\right)^{1/p} \leq C \left(\int_\Omega |\varepsilon(\vv)|^p\, \d x\right)^{1/p},
\end{align}
for any arbitrary vector field $\vv$ in the Sobolev space $W^{1,p}(\Omega,\R^n)$ where $n\geq 2$. The constant $C$ depends only on $\Omega$ and $p$, and $\varepsilon(\vv)$ denotes the symmetric part of the differential matrix $D\vv$, namely,  
\[\varepsilon(\vv):=\frac{D\vv+{D\vv}^T}{2}.\]

This inequality is a fundamental result in the analysis of linear elasticity equations and has been widely studied by several authors since the seminal works by A. Korn published in 1906 and 1909. In this context, the vector field $\vv$ represents the displacement of an elastic body and $\varepsilon(\vv)$ the linear part of the strain tensor.

It is well-known that the validity of the classical Korn inequality depends on the geometry of the domain. For example, inequality (\ref{ClassKorn2}) fails on certain domains with cusps (see \cite{ADL,W}). The largest known class of domains where this inequality is satisfied is the class of John domains (see \cite{ADM,DRS,L2}). This class was introduced by Fritz John in \cite{Joh} and named after him by Martio and Sarvas in \cite{MS} and contains convex domains, Lipschitz domains and even domains with fractal boundary such as the Kock snowflake. 

In this work we deal with a generalized version of (\ref{ClassKorn2}) known simply as generalized Korn inequality or conformal Korn inequality, where the linearized strain vector $\varepsilon(\vv)$ is replaced by its trace-free part:
\[l(\vv):=\varepsilon(\vv)-\frac{\text{div}\,\vv}{n} I,\]
where $I$ in $\R^{n\times n}$ is the identity matrix. More specifically, the main goal of these notes is proving that the inequality 
\begin{align}\label{ConKorn}
\inf_{l(\ww)=0}\left(\int_\Omega |D\vv-D\ww|^p\, \d x\right)^{1/p} \leq C \left(\int_\Omega | l(\vv)|^p\, \d x\right)^{1/p},
\end{align}
is valid for any vector field $\vv$ in $W^{1,p}(\Omega,\R^n)$, where $\O\subset\R^n$ is an arbitrary bounded John domain and $n\geq 3$.

Different types of Korn inequality involving the trace-free part of the symmetric gradient have been recently studied for their interest as a mathematical result and for their applications, for instance, to general relativity and Cosserat elasticity. See the following articles and references therein \cite{FZ,FS,Sch}. However, we are specially interested in two Korn-type inequalities published in \cite{R1} and \cite{Da} where no assumptions on the values of the vector fields over the boundary of the domain are considered. In the first article, Yu. G. Reshetnyak showed the following generalized Korn inequality over start-shaped domains with respect to a ball in $\R^n$, where $n\geq 3$ and $1<p<\infty$: 
\begin{equation}\label{ReshCons}
\|\vv-\Pi(\vv)\|_{W^{1,p}(\O)}\leq C\|l(\vv)\|_{L^p(\O)},
\end{equation}
valid for all $\vv\in W^{1,p}(\O,\R^n)$. The operator $\Pi:W^{1,p}(\O,\R^n)\to \Sigma$ is a projection (i.e. a continuous linear operator such that $\Pi(\vv)=\vv$ for all $\vv\in\Sigma$), where $\Sigma$ is the kernel of $l$ and it is endowed with the topology of $L^p(\O,\R^n)$. This result was proved by using a certain integral representation of the vector field $\vv$ in terms of $l(\vv)$ and then the theory of singular integral operators. Let us recall that the class of star-shaped domains with respect to a ball contains convex domains and it is strictly included in the class of John domains. The study of this inequality was motivated by its connexion with the stability of Liouville's theorem.

The second Korn-type inequality of our interest was published in \cite{Da} and says  
\begin{equation}\label{DainCons}
\|\vv\|_{W^{1,2}(\O)}\leq C\{\|(\vv)\|_{L^2(\O)}+\|l(\vv)\|_{L^2(\O)}\},
\end{equation}
where $\vv$ is an arbitrary vector field in $W^{1,2}(\O,\R^n)$ and $C$ depends only on $\O$. In this case $\O$ is an arbitrary bounded Lipschitz domain in $\R^n$, with $n\geq 3$. This theorem was proved in \cite{Da} by using the classical result known as Lions Lemma that claims that any distribution in the space $H^{-1}(\O)$ with gradient in $H^{-1}(\O)^n$ belongs to $L^2(\O)$. A generalized version of this result for distributions in $H^{-2}(\O)$ is also required. It is shown in \cite{Da} that inequality (\ref{DainCons}) fails on planar domains independently of the geometry of the domain.

The main result in this work states that generalized Korn inequality (\ref{ConKorn}) holds also in weighted Sobolev spaces on John domains where the weights are nonnegative powers of the distance to the boundary. 

\begin{theorem}\label{gkJohn} Let $\Omega\subset\R^n$ be a bounded John domain with $n\geq 3$, $1<p<\infty$ and $\beta\geq 0$. Then, there exists a constant $C$ such that
\begin{align}\label{wgkJohn}
\inf_{l(\ww)=0}\left(\int_\Omega |D\vv-D\ww|^p\rho^{p\beta}\, \d x\right)^{1/p} \leq C \left(\int_\Omega |l(\vv)|^p\rho^{p\beta}\, \d x\right)^{1/p}
\end{align}
for all vector field $\vv\in W^{1,p}(\Omega,\R^n,\rho^\beta)$. 
The function $\rho(x)$ is the distance from $x$ to the boundary of $\Omega$.
\end{theorem}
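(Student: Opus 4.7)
The plan is to establish \eqref{wgkJohn} by a local-to-global argument based on a Boman (John-chain) decomposition of $\O$ into a countable family of overlapping open subsets $\{\O_t\}_{t\in\Gamma}$ indexed by the vertices of a tree, each of which is star-shaped with respect to a ball of controlled geometry. On each chain element $\O_t$, Reshetnyak's inequality \eqref{ReshCons} supplies the local version of the generalized Korn inequality, producing a continuous projection $\Pi_t\colon W^{1,p}(\O_t,\R^n)\to\Sigma$ onto the kernel $\Sigma$ of $l$ such that
\[\|D\vv-D\Pi_t\vv\|_{L^p(\O_t)}\leq C\,\|l(\vv)\|_{L^p(\O_t)},\]
with a constant uniform in $t$ (by scaling, together with the bounded geometry of the $\O_t$). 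Since $\rho$ is comparable to a constant $\rho_t$ on each $\O_t$, the weight $\rho^{p\beta}$ effectively degenerates into the constant $\rho_t^{p\beta}$ in the local estimate.

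The passage from these local bounds to a single global $\ww\in\Sigma$ is carried out through the weighted decomposition of integrable functions developed by the author for treating the divergence equation and the classical Korn inequality on John domains: given $g\in L^p(\O,\rho^{p\beta})$ with $\int_\O g=0$, one writes $g=\sum_t g_t$ with $\mathrm{supp}\,g_t\subset\O_t$, $\int_{\O_t}g_t=0$, and
\[\sum_t\|g_t\|_{L^p(\O_t,\rho^{p\beta})}^p\leq C\,\|g\|_{L^p(\O,\rho^{p\beta})}^p.\]
Applied by duality in conjunction with the local Reshetnyak estimate, this device converts the family of local bounds into a global one, while the differences $\Pi_t\vv-\Pi_{\tilde t}\vv$ of projections on adjacent pieces, being elements of the finite-dimensional space $\Sigma$, are controlled on the overlap by $\|l(\vv)\|_{L^p(\O_t\cup\O_{\tilde t})}$ thanks to equivalence of norms on $\Sigma$. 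Telescoping along the tree then assembles the local projections into a global $\ww$ realizing the infimum in \eqref{wgkJohn}.

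The main obstacle, compared with the classical setting, is the much richer structure of $\Sigma$. For $n\geq 3$, the kernel of $l$ is the space of conformal Killing vector fields of dimension $(n+1)(n+2)/2$, which in addition to translations, rotations and dilations contains the quadratic inversions $\vv(x)=|x|^2 a-2(a\cdot x)x$. Their nonlinear dependence on $x$ means that bounds on an element of $\Sigma$ on the overlap of two adjacent chain elements do not translate into uniform bounds on $\O_t\cup\O_{\tilde t}$ in a scale-free manner; the bounded overlap and shape hypotheses built into the John chain must be exploited with care to propagate control of these higher-order terms along the tree. The restriction $n\geq 3$ is essential at this point, inherited from \eqref{ReshCons}, which as recalled in the introduction already fails in the plane independently of the geometry of the domain.
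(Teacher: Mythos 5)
Your overall strategy --- local Reshetnyak estimates on star-shaped pieces of a Whitney/Boman--John structure, glued by a tree-indexed decomposition --- is the same local-to-global scheme the paper uses. But there is a genuine gap at the decisive step, and you have in effect flagged it yourself without closing it.

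The decomposition you invoke is the one for the divergence problem: a scalar $g$ with $\int_\O g=0$ split into pieces with $\int_{\O_t}g_t=0$. In the duality argument for a Korn-type inequality, the local test piece $\tg_t$ must annihilate the \emph{differentials of the local kernel}, i.e.\ one needs $\int_{\O_t}\tg_t:\tvfi=0$ for every $\tvfi\in D(\Sigma)$, so that $\int_{\O_t}D\uu:\tg_t=\int_{\O_t}(D\uu-D\ww):\tg_t$ can be paired with the local infimum over $\Sigma$. For the classical Korn inequality $D(\Sigma)$ consists of constant skew-symmetric matrices and the mean-zero-type decomposition suffices. Here $D(\Sigma)=\V$ is spanned by constant skew-symmetric matrices, the identity, \emph{and} the matrices $H_i(x)$, which depend linearly on $x$ (these are exactly the differentials of the quadratic conformal fields you mention). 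Producing a decomposition $\tg=\sum_t\tg_t$ with $\operatorname{supp}\tg_t\subset\O_t$, full orthogonality to $\V$ on each piece, and the weighted bound $\sum_t\|\tg_t\|^q_{L^q(\O_t,\rho^{-\beta})}\leq C\|\tg\|^q_{L^q(\O,\rho^{-\beta})}$ is the main technical content of the paper (its Section 4): it is built in two stages --- first correcting against the constant matrices, then against the $H_i$ using correction functions chosen orthogonal to the constants so as not to destroy the first stage --- with the transferred mass controlled pointwise by a Hardy-type operator on the tree whose weighted $L^q$ norm grows like $K^{n+1+\beta}$. None of this is supplied by citing the existing divergence/Korn decomposition, and your alternative of telescoping local projections $\Pi_t\vv-\Pi_{\tilde t}\vv$ along the tree runs into exactly the same unresolved issue: propagating control of the quadratic (inversion) components of elements of $\Sigma$ from an overlap $B_t$ to the whole shadow $W_t$ requires quantitative scale-dependent estimates that you only assert ``must be exploited with care.''

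A secondary inaccuracy: what fails in the plane is the Dain-type inequality \eqref{DainCons}, because $\ker l$ is infinite-dimensional for $n=2$; the restriction $n\geq3$ enters through the finite-dimensionality of $\Sigma$ (hence of $\V$), which is what makes the finitely many orthogonality corrections in the decomposition possible at all.
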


Let us recall the definition of John domains. A bounded domain $\Omega\subset\R^n$, with $n\geq 2$, is called a {\it John domain} with parameter $C>1$ if there exists a point $x_0\in\O$ such that every $y\in\O$ has a rectifiable curve parameterized by arc length $\gamma:[0,l]\to\Omega$ such that $\gamma(0)=y$, $\gamma(l)=x_0$ and 
\begin{align*}
\text{dist}(\gamma(t),\partial\O)\geq \frac{1}{C}t
\end{align*}
for all $t\in[0,l]$, where $l$ is the length of $\gamma$.

Finally, as a Corollary of the main theorem we prove that (\ref{ReshCons}) and (\ref{DainCons}) are also valid on bounded John domains.

\section{Notation}
\label{Prel}
\setcounter{equation}{0}

Throughout the paper, $\Omega\subset\R^n$ is a bounded domain with $n\geq 3$ and $1<p,q<\infty$ with $\frac{1}{p}+\frac{1}{q}=1$. Given $\eta:\O\to\R$ a positive measurable function we denote by $L^p(\Omega,\eta)$ the space of Lebesgue measurable functions $u:\Omega\to\R$
equipped with the norm:
\[\|u\|_{L^p(\Omega,\eta)}:=\left(\int_\Omega|u(x)|^p\eta^p(x)\,\d x\right)^{1/p}.\]
Analogously, we define the weighted Sobolev spaces $W^{1,p}(\Omega,\eta)$ as the
space of weakly differentiable functions $u:\Omega\to\R$ with
the norm:
\[\|u\|_{W^{1,p}(\Omega,\eta)}:=\left(\int_\Omega|u(x)|^p\eta^p(x)\,\d x
+\sum_{i=1}^n\int_\Omega\left|\frac{\partial u(x)}{\partial
x_i}\right|^p\eta^p(x)\,\d x\right)^{1/p}.\]
We extend this definition to function from $\O$ to $\Rnn$ and from $\O$ to $\R^n$ denoted by $L^p(\Omega,\Rnn,\eta)$ and $W^{1,p}(\Omega,\R^n,\eta)$, respectively.

Given $\tg:\R^n\to \R^{n\times n}$ and $1\leq r \leq\infty$ we denote by $\|\tg\|_r:\R^n\to\R$ the following function
\[\|\tg\|_r(x):=\left(\sum_{1\leq i,j\leq n} |\tg_{ij}(x)|^r\right)^{1/r}\]
if $r\neq \infty$, and
\[\|\tg\|_\infty(x):=\max_{1\leq i,j\leq n} |\tg_{ij}(x)|.\] Notice that for any $1\leq r_1,r_2\leq \infty$ there is a positive constant $C=C(r_1,r_2)$ such that 
\begin{equation*}
\frac{1}{C}\|\tg\|_{r_1}(x)\leq \|\tg\|_{r_2}(x)\leq C \|\tg\|_{r_1}(x)
\end{equation*}
for all functions $\tg$ and $x\in\O$. 

Moreover, $|\tg|:\R^n\to\R^{n\times n}$ is given by $|\tg|_{ij}(x)=|\tg_{ij}(x)|$. We say that the function $\tg$ is integrable (similarly bounded) if each coordinate is integrable (bounded). 

For $\tg,\tf:\R^n\to\R^{n\times n}$ we denote by $\tg(x):\tf(x)$ the product coordinate by coordinate 
\begin{align*}
\tg(x):\tf(x):=\sum_{1\leq i,j \leq n} \tg_{ij}(x)\tf_{ij}(x).
\end{align*}
We say that $x$ belongs to $supp(\tg)$ iff $\tg(x)$ has at least one coordinate different from zero. We denote with tilde those functions with codomain in $\Rnn$. 

Finally, a Whitney decomposition of $\O$ is a collection $\{Q_t\}_{t\in\Gamma}$ of closed dyadic cubes whose interiors are pairwise disjoint, which verifies
\begin{enumerate}
\item $\O=\bigcup_{t\in\Gamma}Q_t$,
\item $\text{diam}(Q_t) \leq \text{dist}(Q_t,\partial\Omega) \leq 4\text{diam}(Q_t)$,
\item $\frac{1}{4}\text{diam}(Q_s)\leq \text{diam}(Q_t)\leq 4\text{diam}(Q_s)$, if $Q_s\cap Q_t\neq \emptyset$.
\end{enumerate}
Two different cubes $Q_s$ and $Q_t$ with $Q_s\cap Q_t\neq \emptyset$ are called {\it neighbors}. Notice that two neighbors may have an intersection with dimension less than $n-1$. For instance, they could be intersecting each other in a one-point set. We say that $Q_s$ and $Q_t$ are \mbox{$(n-1)$}-neighbors if $Q_s\cap Q_t$ is the $n-1$ dimensional face of one of them.
This kind of covering exists for any proper open set in $\R^n$ (see \cite{S} for details). Moreover, each cube $Q_t$ has less than $12^n$ neighbors. And, if we fix $0<\epsilon<\frac{1}{4}$ and define $Q_t^*$ as the cube with the same center as $Q_t$ and side length $(1+\epsilon)$ times the side length of $Q_t$, then $Q_t^*$ touches $Q^*_s$ if and only if  $Q_t$ and $Q_s$ are neighbors. 

Given a Whitney decomposition $\{Q_t\}_{t\in\Gamma}$ of $\O$ we refer by an extended Whitney decomposition of $\O$ to the collection of open cubes $\{\Omega_t\}_{t\in\Gamma}$ defined by 
\begin{align*}
\Omega_t:=\frac98 Q_t^\circ.
\end{align*}
Observe that this collection of cubes satisfies that \[\chi_\O(x) \leq \sum_{t\in\Gamma}\chi_{\O_t}(x)\leq 12^n \chi_\O(x)\]
for all $x\in\R^n.$

\section{Proof of the main result}
\label{proof}
\setcounter{equation}{0}

This section is devoted to show Theorem \ref{gkJohn}. The proof follows from a local-to-global argument based on the validity of (\ref{wgkJohn}), with $\beta=0$, on cubes and a certain decomposition of functions stated in Lemma \ref{DecompSimple} which is proved in Section \ref{V-decomp}.

The following result is implied by the validity of (\ref{ReshCons}) proved by Reshetnyak \cite{R1} over star-shaped domains with respect to a ball.
\begin{corollary}\label{ConKornCubes}
Let $Q\subset\R^n$ be an arbitrary cube with sides parallel to the axis, with $n\geq 3$, and $1<p<\infty$. Then, there exists a positive constant $C$ that depends only on $n$ and $p$ such that 
\begin{align}\label{infwKorn}
\inf_{l(\ww)=0}\left(\int_Q |D\vv-D\ww|^p\, \d x\right)^{1/p} \leq C \left(\int_Q |l(\vv)|^p\, \d x\right)^{1/p}
\end{align}
for all $\vv\in W^{1,p}(Q,\R^n)$.
\end{corollary}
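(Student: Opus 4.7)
The plan is to deduce this directly from Reshetnyak's inequality~(\ref{ReshCons}) by exploiting two facts: every cube with sides parallel to the axes is star-shaped with respect to its inscribed ball (in fact, star-shaped with respect to any ball centered at the cube's center and contained in it), and the constant in (\ref{infwKorn}) transfers between cubes under translation and isotropic scaling.

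First I would fix an arbitrary cube $Q$ with sides parallel to the axes and apply~(\ref{ReshCons}) to any $\vv\in W^{1,p}(Q,\R^n)$. This yields
\[
\|\vv-\Pi(\vv)\|_{W^{1,p}(Q)}\leq C_Q\,\|l(\vv)\|_{L^p(Q)},
\]
where $\Pi(\vv)\in\Sigma$, so in particular $l(\Pi(\vv))=0$. Choosing $\ww:=\Pi(\vv)$ as a competitor in the infimum on the left of (\ref{infwKorn}), and observing that $\|D\vv-D\ww\|_{L^p(Q)}$ is dominated by $\|\vv-\ww\|_{W^{1,p}(Q)}$, we obtain the desired inequality on $Q$ with constant $C_Q$.

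The main obstacle, and really the only nontrivial point, is showing that the constant $C_Q$ can be taken independent of $Q$, depending only on $n$ and $p$. I would address this with a scaling argument. Fix the reference unit cube $Q_0$ centered at the origin and let $C_0$ be the constant obtained above for $Q_0$. Given an arbitrary cube $Q=\lambda Q_0+x_0$, to each $\vv\in W^{1,p}(Q,\R^n)$ associate $\vv_0(y):=\vv(\lambda y+x_0)$ on $Q_0$. Then $D\vv_0(y)=\lambda\,D\vv(\lambda y+x_0)$ and $l(\vv_0)(y)=\lambda\,l(\vv)(\lambda y+x_0)$, so a change of variables gives
\[
\int_{Q}|D\vv-D\ww|^p\,\d x=\lambda^{n-p}\int_{Q_0}|D\vv_0-D\ww_0|^p\,\d y,
\]
and analogously for the right-hand side of (\ref{infwKorn}). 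Since the same bijection $\ww\mapsto\ww_0$ preserves the kernel of $l$, the inequality on $Q_0$ transfers to $Q$ with the same constant $C_0$.

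No further ingredient is needed: translation invariance is immediate, the scaling computation is routine, and the identification of $\ww=\Pi(\vv)$ as a valid competitor makes the passage from Reshetnyak's estimate to the infimum form essentially automatic.
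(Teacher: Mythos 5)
Your proposal is correct and follows essentially the same route as the paper: apply Reshetnyak's inequality~(\ref{ReshCons}) on the cube (star-shaped with respect to a ball), use $\ww=\Pi(\vv)$ as a competitor in the infimum, and transfer the constant to an arbitrary axis-parallel cube by translation and dilation, noting that both sides scale identically because only first-order derivatives appear. Your scaling computation is the explicit version of the paper's brief remark, so nothing further is needed.
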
 
\begin{proof} Cubes are star-shaped domains with respect to a ball, thus from (\ref{ReshCons}) we can conclude that 
\begin{align*}
\inf_{l(\ww)=0} \|D\vv-D\ww\|_{L^p(Q)} \leq \|\vv-\Pi(\vv)\|_{W^{1,p}(Q)}\leq C\|l(\vv)\|_{L^p(Q)},
\end{align*}
where $C$ depends on $p$, $n$ and $Q$. It only remains to prove that there is a uniform constant that makes (\ref{infwKorn}) valid for any arbitrary cube with sides parallel to the axis. Let $Q_0$ be the cube $(0,1)^n$ with constant $C_{Q_0}$ in the inequality (\ref{infwKorn}). Hence, any other cube with sides parallel to the axis can be obtained by a translation and dilation of $Q_0$. Now, the inequality only involves partial derivatives of first order, thus by making a change of variable we can extend the validity of this Korn type inequality from $Q_0$ to any other cube in the sttatement of  corollary with the same constant $C_{Q_0}$. 
\end{proof}

The kernel of the operator $l$, denoted by $\Sigma$ in these notes, plays a central role in this local-to-global argument. So, let us recall its characterization which is significantly different if $n=2$ or $n\geq 3$. In the planar case, $\Sigma$ is an infinite dimensional space where $\ww\in\Sigma$ iff $\ww(x,y)=(w_1(x,y),w_2(x,y))$ where $w_1$ and $w_2$ are the components of an analytical function $F(x+iy):=w_1(x,y)+iw_2(x,y)$. The fact that the kernel has infinite dimension and the well-known Rellich-Kondrachov Theorem for Sobolev spaces imply the failure of (\ref{DainCons}) for planar domains (see \cite{Da}). We have described the planar case for general knowledge, however, in this article, we deal with $n\geq 3$. In that case , when $n\geq 3$, the kernel of $l$ has a finite dimension equal to $\frac{(n+1)(n+2)}{2}$ and a vector field $\ww\in\Sigma$ iff  
\begin{align}\label{Kernel}
\ww(x)=a+A(x-y)+\lambda (x-y)+\left\{\langle b,x-y\rangle (x-y)-\frac{1}{2}|x-y|^2b\right\},
\end{align}
where $A\in\R^{n\times n}$ is skew-symmetric, $a,b\in\R^n$ and $\lambda\in\R$. The vector $y\in\R^n$ is arbitrary but must be fixed to have uniqueness for this representation.

Now,  we define the space $\V$ which elements are the differential matrix of the vector fields in $\Sigma$. Namely,
\begin{equation*}
\begin{split}
\mathcal{V}:=\{\tvfi:\R^n\to\R^{n\times n}&:\tvfi(x)=A+\lambda I+\sum_{i=1}^n b_i H_i(x-y)\\ 
& \text{ with } A\text{ skew-symmetric, }\lambda\in\R\text{ and } b_i\in\R\text{ for all }i\}.
\end{split}
\end{equation*}
The matrix $I$ is the identity and, for $1\leq i\leq n$, $H_i(x)$ is the functions with values in $\R^{n\times n}$ defined by  
\begin{equation}\label{MatrHi}
(H_i)_{jk}(z)=\left\{
  \begin{array}{r l}
  z_i & \quad \text{if }j=k\\
  z_j & \quad \text{if }j\neq k,\ k=i\\
-z_k & \quad \text{if }j\neq k,\ j=i\\     
     0 & \quad \text{otherwise}\\
   \end{array} \right.
\end{equation}
for $1\leq j,k\leq n$. In the particular case when $n=3$, we have:
\begin{align*}
H_1(z)=\left( \begin{array}{rrr}
z_1 & -z_2 & -z_3 \\
z_2 & z_1 & 0 \\
z_3 & 0 & z_1 \end{array} \right)
\hspace{.2cm}
H_2(z)=\left( \begin{array}{rrr}
z_2 & z_1 & 0 \\
-z_1 & z_2 & -z_3 \\
0 & z_3 & z_2  \end{array} \right) 
\hspace{.2cm}
H_3(z)=\left( \begin{array}{rrr}
z_3 & 0 & z_1 \\
0 & z_3 & z_2 \\
-z_1 & -z_2 & z_3 \end{array} \right).
\end{align*}
Observe that the definition of $\V$ does not depend on the vector $y\in\R^n$. By taking a different $y$ we only obtain a different representation of the functions $\tvfi(x)$ in $\V$. Thus, let us denote by $m:=\frac{n(n-1)}{2}+1+n$ the dimension of $\V$. Finally, to prove that $\tvfi$ belongs to $\V$ iff $\tvfi=D\ww$ for some $\ww\in\Sigma$ it is sufficient to show that the quadratic part appearing between brackets in (\ref{Kernel}), denoted by $\rr$ for simplicity, is $\sum\limits_{i=1}^n b_i H_i(x-y)$. Indeed, 
\begin{equation*}\label{Hi}
\frac{\partial \rr_j}{\partial x_k}=\left\{
  \begin{array}{l l}
     \sum\limits_{i=1}^n b_i(x_i-y_i) & \quad \text{if }j=k\\
       & \\
     b_k(x_j-y_j)-b_j(x_k-y_k) & \quad \text{if }j\neq k.\\
   \end{array} \right.
\end{equation*}
Thus, $\left(\frac{\partial \rr_j}{\partial x_k}\right)_{jk}=\sum\limits_{i=1}^n b_i H_i(x-y)$ concluding that \[``\V=D(\Sigma)".\]

Now, we define the subspace $\W \subset L^q(\O,\Rnn,\rho^{-\beta})$ by: 
\begin{equation*}
\W:=\{\tg\in L^q(\Omega,\Rnn,\rho^{-\beta}): \int \tg:\tvfi=0\text{ for all }\tvfi\in \V\},
\end{equation*}
where the product ``$:$'' is the standard inner product for vectors understanding matrices in $\R^{n\times n}$ as vectors in $\R^{n^2}$.
Notice that $\rho^{p\beta}$ belongs to $L^1(\Omega)$ thus 
\[L^q(\Omega,\Rnn,\rho^{-\beta})\subset L^1(\O,\Rnn).\] 
Moreover, using that $\Omega$ is bounded it follows that $\V\subset L^\infty(\Omega,\Rnn)$. Hence, $\W$ is well-defined.

\begin{lemma}\label{sum} The space $L^q(\Omega,\R^{n\times n},\rho^{-\beta})$ can be written as $\W\oplus\rho^{p\beta}\V.$ Moreover, for all $\tF=\tg+\rho^{p\beta}\tpsi$ in $\W\oplus\rho^{p\beta}\V$ it follows that $\|\tg\|_{L^q(\Omega,\rho^{-\beta})}\leq C_2\|\tF\|_{L^q(\O,\rho^{-\beta})}$ where
\begin{align}\label{dense cons}
C_1= \left(1+\sum_{j=1}^m \|\tpsi_j\|_{L^p(\O,\rho^\beta)} \|\tpsi_j\|_{L^q(\O,\rho^\beta)}\right).
\end{align}
The collection $\{\tpsi_j\}_{1\leq j\leq m}$ in the previous identity is an arbitrary orthonormal basis of $\V$ with respect to the inner product
\begin{equation*}
\langle \tpsi,\tvfi\rangle_\O=\int_\O \tpsi(x):\tvfi(x) \, \rho^{p\beta}(x)\,\d x.
\end{equation*}
\end{lemma}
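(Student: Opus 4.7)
The plan is to realize the decomposition as an oblique projection. Since $\V$ is a finite-dimensional space of matrix-valued polynomials of degree at most one and $\rho>0$ on $\O$, the weighted pairing $\langle\cdot,\cdot\rangle_\O$ is positive definite on $\V$ (any $\tvfi\in\V$ with $\langle\tvfi,\tvfi\rangle_\O=0$ must vanish on a set of positive measure, hence everywhere). Thus an orthonormal basis $\{\tpsi_j\}_{j=1}^m$ exists, and this structure dictates the correct projection formula.

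For the sum being direct, suppose $\tg+\rho^{p\beta}\tvfi=0$ with $\tg\in\W$ and $\tvfi\in\V$. Pairing with $\tvfi$ against $\,\d x$ gives
\begin{equation*}
0=\int_\O \tg:\tvfi\,\d x+\int_\O \rho^{p\beta}\tvfi:\tvfi\,\d x=0+\langle\tvfi,\tvfi\rangle_\O,
\end{equation*}
so $\tvfi\equiv 0$ and $\tg=0$. For existence of the decomposition, given $\tF\in L^q(\O,\Rnn,\rho^{-\beta})$, I would define $c_j:=\int_\O \tF:\tpsi_j\,\d x$, which is finite by H\"older's inequality since $|c_j|\leq \|\tF\|_{L^q(\O,\rho^{-\beta})}\|\tpsi_j\|_{L^p(\O,\rho^\beta)}$ and $\tpsi_j\in L^\infty(\O)\subset L^p(\O,\rho^\beta)$ by boundedness of $\O$. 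Setting $\tpsi:=\sum_j c_j\tpsi_j\in\V$ and $\tg:=\tF-\rho^{p\beta}\tpsi$, the orthonormality $\int_\O \tpsi_j:\tpsi_k\rho^{p\beta}\,\d x=\delta_{jk}$ immediately gives $\int_\O \tg:\tpsi_k\,\d x=c_k-c_k=0$ for every $k$, so $\tg\in\W$ by linearity. To check $\rho^{p\beta}\tpsi\in L^q(\O,\rho^{-\beta})$, use $q(p-1)=p$ to rewrite $\int|\rho^{p\beta}\tpsi|^q\rho^{-\beta q}\,\d x=\int\rho^{\beta p}|\tpsi|^q\,\d x$, which is finite because $\O$ is bounded and $\tpsi$ is a matrix-polynomial, hence bounded on $\O$.

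The quantitative bound then follows from the triangle inequality
\begin{equation*}
\|\tg\|_{L^q(\O,\rho^{-\beta})}\leq \|\tF\|_{L^q(\O,\rho^{-\beta})}+\sum_{j=1}^m |c_j|\,\|\rho^{p\beta}\tpsi_j\|_{L^q(\O,\rho^{-\beta})},
\end{equation*}
combined with the H\"older bound on $|c_j|$ above and a direct computation of $\|\rho^{p\beta}\tpsi_j\|_{L^q(\O,\rho^{-\beta})}$ in terms of $\|\tpsi_j\|_{L^q(\O,\rho^\beta)}$ (using again that $\rho$ is bounded on $\O$ so that $\rho^{\beta p}$ and $\rho^{\beta q}$ differ by a constant depending only on $\diam\O$ and $\beta$). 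The lemma is essentially an abstract orthogonal-projection statement, so no real obstacle arises; the only slightly delicate point is the interaction between the two distinct pairings, namely the \emph{unweighted} one $\int\tg:\tvfi\,\d x$ defining $\W$ and the \emph{weighted} one $\langle\cdot,\cdot\rangle_\O$ defining the orthonormal basis of $\V$, which is exactly what forces the weight $\rho^{p\beta}$ in the summand $\rho^{p\beta}\tpsi\in\rho^{p\beta}\V$.
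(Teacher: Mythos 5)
Your proposal is correct and follows essentially the same route as the paper: define the coefficients $c_j=\int_\O\tF:\tpsi_j$ via the unweighted pairing against the $\rho^{p\beta}$-orthonormal basis, set $\tg=\tF-\rho^{p\beta}\sum_j c_j\tpsi_j$, check membership using $q(p-1)=p$ and the boundedness of $\O$, and obtain the constant from H\"older plus the triangle inequality. The only difference is that you spell out the directness of the sum (via positive definiteness of $\langle\cdot,\cdot\rangle_\O$ on the finite-dimensional polynomial space $\V$), which the paper dismisses as a linear-algebra exercise.
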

\begin{proof} Notice that $\rho^{p\beta}\tpsi$, with $\tpsi\in\V$, belongs to $L^q(\Omega,\Rnn,\rho^{-\beta})$. Indeed, 
\[\|\rho^{p\beta}\tpsi\|^q_{L^q(\O,\rho^{-\beta})} = \int_\O\|\tpsi\|_q^q\rho^{pq\beta}\o^{-q\beta}  \leq \sup_{x\in\Omega}\|\tpsi(x)\|_q^q \int_\Omega \rho^{p\beta}(x)\dx.\]
Thus, $\rho^{p\beta}\V$ is a subspace of $L^q(\O,\Rnn,\rho^{-\beta})$.
  
The representation follows naturally from the definition of $\W$. Indeed, given $\tF$ in the space $L^q(\Omega,\R^{n\times n},\rho^{-\beta})$ we take
\begin{align}\label{rep}
\tpsi_{\tF}(x):=\sum_{j=1}^m \alpha_{\tF,j}\tpsi_j(x),
\end{align}
where $\alpha_{\tF,j}:=\int_\O \tF : \tpsi_j$ for any $1\leq j\leq m$. Thus, $\tF=\th_{\tF}+\rho^{p\beta}\tpsi_{\tF}$ where $\tpsi_{\tF} \in \V$  and  $\th_{\tF}:=\tF-\rho^{p\beta}\tpsi_{\tF}\in \W$. The uniqueness is a simple exercise of linear algebra. Now, to obtain (\ref{dense cons}) notice that the coefficients  $\alpha_{\tF,j}$ verify
\begin{align}\label{rep coeff}
|\alpha_{\tF,j}|\leq \|\tF\|_{L^q(\Omega,\rho^{-\beta})}\|\tpsi_j\|_{L^p(\Omega,\rho^\beta)},
\end{align}
for all $j$. Thus, from (\ref{rep}) and (\ref{rep coeff}) we have
\begin{align*}
\|\th_{\tF}\|_{L^q(\Omega,\rho^{-\beta})}&\leq \left(1+\sum_{j=1}^m \|\tpsi_j\|_{L^p(\O,\rho^\beta)} \|\rho^{p\beta}\tpsi_j\|_{L^q(\O,\rho^{-\beta})}\right)\|\tF\|_{L^q(\O,\rho^{-\beta})}\\
&= \left(1+\sum_{j=1}^m \|\tpsi_j\|_{L^p(\O,\rho^\beta)} \|\tpsi_j\|_{L^q(\O,\rho^\beta)}\right)\|\tF\|_{L^q(\O,\rho^{-\beta})}.
\end{align*}
\end{proof}

Given a decomposition of extended Whitney cubes $\{\O_t\}_{t\in\Gamma}$ of $\O$, we define the subspace $\W_0\subset \W$ by: 
\begin{equation*}
\W_0:=\{\tg\in \W: supp(\tg)\cap \O_t\neq \emptyset\text{ only for a finite number of }t\in\Gamma\}.
\end{equation*}

\begin{lemma}\label{DecompSimple} Let $\Omega\subset\R^n$ be a bounded John domain and $\{\O_t\}_{t\in \Gamma}$ a decomposition of extended Whitney cubes. Then, there exists a positive constant $C_0$ such that for any $\tg\in\W_0$, there is a collection of functions $\{\tg_t\}_{t\in\Gamma}$ with the following properties:
\begin{enumerate}
\item $\tg=\sum_{t\in \Gamma} \tg_t.$
\item $supp(\tg_t)\subset\Omega_t.$
\item $\tg_t\in\W_0$, for all $t\in\Gamma$.
\end{enumerate}
We call this collection of functions a $\V$-decomposition of $\tg$ subordinate to $\{\Omega_t\}_{t\in\Gamma}$. 

In addition, it satisfies 
\begin{align}\label{Estimation}
\sum_{t\in \Gamma} \|\tg_t\|^q_{L^q(\O_t,\rho^{-\beta})}\leq C^q_0\|\tg\|^q_{L^q(\O,\rho^{-\beta})}.
\end{align}

Moreover, $\tg_t\not\equiv 0$ only for a finite number of $t\in\Gamma$.
\end{lemma}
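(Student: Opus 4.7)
The plan is to construct the decomposition along a rooted tree on $\Gamma$ furnished by the John condition. First I would fix a ``central'' cube $\Omega_{t_0}$ containing the John center of $\Omega$ and, for every other $t\in\Gamma$, designate a parent $p(t)\in\Gamma$ so that $\Omega_{p(t)}$ is an $(n-1)$-neighbor of $\Omega_t$ lying strictly closer to $\Omega_{t_0}$ along a shortest John chain. This yields a rooted tree on $\Gamma$ whose edges $(t,p(t))$ carry an overlap cube $W_t\subset\Omega_t\cap\Omega_{p(t)}$ of side length comparable to both. The John property enforces two combinatorial facts that drive the argument: every $t$ is linked to $t_0$ in finitely many steps, and each cube appears in a uniformly bounded number of such chains.

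Next I would pick a partition of unity $\{\phi_t\}_{t\in\Gamma}$ with $\sum_t\phi_t=\chi_\O$ and $supp(\phi_t)\subset\Omega_t$, and on each edge I would fix $m=\dim\V$ smooth ``bump matrices'' $\{\eta_{t,k}\}_{k=1}^m$ supported in $W_t$, rescaled so that the moment matrix $M_t=\bigl(\int \eta_{t,k}:\tpsi_j\bigr)_{j,k}$ is invertible with $\|M_t^{-1}\|$ and the norms $\|\eta_{t,k}\|_{L^q(\O_t,\rho^{-\beta})}$ controlled uniformly in $t$. Such bumps exist because on a cube whose side length is comparable to $\diam(\Omega_t)$ and to $\text{dist}(\Omega_t,\partial\O)$ the finite-dimensional space $\V$ obeys scale-invariant moment estimates, and $\rho^{-\beta}$ is essentially constant on each Whitney cube.

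Letting $T_t\subset\Gamma$ denote the subtree rooted at $t$, write $\tg^{(t)}:=\sum_{s\in T_t}\phi_s\tg$ and define $\Pi_t(\tf)$ to be the unique linear combination of $\{\eta_{t,k}\}$ whose $\V$-moments equal those of $\tf$. Set
\begin{align*}
\tg_t\;:=\;\phi_t\tg\;-\;\Pi_t\bigl(\tg^{(t)}\bigr)\;+\;\sum_{c:\,p(c)=t}\Pi_c\bigl(\tg^{(c)}\bigr).
\end{align*}
This is the standard ``defect-passing'' construction: the middle term removes the $\V$-moment of the subtree rooted at $t$, while the last term reinserts moments one step below, so the corrections telescope down the tree. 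A direct bookkeeping then yields $\sum_t\tg_t=\tg$, $supp(\tg_t)\subset\Omega_t$, and $\int\tg_t:\tvfi=0$ for every $\tvfi\in\V$, which is the statement $\tg_t\in\W_0$. Since $\tg\in\W_0$ is supported on finitely many cubes, the moments $\int\tg^{(t)}:\tpsi_j$ vanish outside a finite subtree and thus only finitely many $\tg_t$ are nonzero.

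The main obstacle will be the weighted $L^q$ estimate (\ref{Estimation}). By the uniform invertibility of $M_t$ and the bounds on $\|\eta_{t,k}\|_{L^q(\O_t,\rho^{-\beta})}$, the correction $\Pi_t(\tg^{(t)})$ is dominated in $L^q(\O_t,\rho^{-\beta})$ by a constant times $\|\tg\|_{L^q(U_t,\rho^{-\beta})}$, where $U_t:=\bigcup_{s\in T_t}\Omega_s$. The estimate therefore reduces to a Hardy-type inequality on the John tree,
\begin{align*}
\sum_{t\in\Gamma}\|\tg\|_{L^q(U_t,\rho^{-\beta})}^q\;\leq\; C\,\|\tg\|_{L^q(\O,\rho^{-\beta})}^q,
\end{align*}
which is precisely where the Boman chain combinatorics enters: each cube lies in a uniformly bounded number of ancestor chains, and the $(n-1)$-neighbor property provides the geometric decay needed to iterate down the tree. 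Because $\rho^{-\beta}$ is essentially constant on each Whitney cube, the weighted sum collapses to a tree sum with bounded branching, and the full estimate follows; this bookkeeping is what Section \ref{V-decomp} will spell out in detail.
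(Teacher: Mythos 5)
Your construction of the decomposition itself is essentially the paper's: a rooted tree on $\Gamma$ adapted to the John geometry, a partition of unity, and ``defect-passing'' corrections built from a local dual basis of $\V$ supported in small overlap cubes on the tree edges (the paper merely performs the correction in two steps, first against the constant matrices and then against the linear ones $H_i$, which is a cosmetic way of producing your biorthogonal system $\{\eta_{t,k}\}$). Properties (1)--(3) and the finiteness claim go through as you describe. The genuine gap is in the proof of (\ref{Estimation}). You bound the correction $\Pi_t(\tg^{(t)})$ in $L^q(\O_t,\rho^{-\beta})$ by $C\,\|\tg\|_{L^q(U_t,\rho^{-\beta})}$, where $U_t$ is the union of the cubes in the subtree rooted at $t$, and then invoke
\begin{align*}
\sum_{t\in\Gamma}\|\tg\|^q_{L^q(U_t,\rho^{-\beta})}\;\leq\;C\,\|\tg\|^q_{L^q(\O,\rho^{-\beta})}.
\end{align*}
This inequality is false. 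The left-hand side equals $\int_\O |\tg|^q\rho^{-q\beta}\,N(x)\dx$ with $N(x)=\#\{t:\,x\in U_t\}$, and $N(x)$ counts (up to the factor $12^n$) the ancestors of the Whitney cubes containing $x$; for a John domain this is comparable to $\log(1/\rho(x))$ and is unbounded. Your supporting claim that ``each cube appears in a uniformly bounded number of such chains'' is exactly what fails: a cube has boundedly many children but unboundedly many descendants, hence it lies on unboundedly many root-to-leaf chains and belongs to the shadow $U_t$ of every one of its ancestors $t$.

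The repair is to not throw away the averaging before summing. The coefficients of $\Pi_t(\tg^{(t)})$ are of size $\frac{1}{|U_t|}\int_{U_t}\|\tg\|_1$, with constants controlled because the John tree structure gives $U_t\subseteq KQ_t$ (Lemma \ref{tcovJohn}), and the correction is supported in the small overlap cubes, which are chosen pairwise disjoint. This yields the pointwise bound $\|\tg_t\|_\infty(x)\leq\|\tg\|_\infty(x)+C_nK^{n+1}\,T\|\tg\|_1(x)$ of (\ref{P12}), where $T$ is the Hardy-type operator on the tree defined in (\ref{Ttree}); the disjointness of the supports eliminates the overcounting that sinks your version. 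Estimate (\ref{Estimation}) then follows from the boundedness of $T$ on $L^q(\O,\rho^{-\beta})$ (Lemma \ref{weighted T}), which is a genuine weighted Hardy inequality on trees proved in \cite{L2} using the John geometry --- not a bounded-overlap counting argument. In short: your decomposition is the right one, but the final summation must pass through the operator $T$ rather than through $\ell^q$-summability of the shadow norms.
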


Lemma \ref{DecompSimple}, which is fundamental in these notes, is proved in the next section.

Now, we define the following subspace $\S$ of $L^q(\Omega,\R^{n\times n},\rho^{-\beta})$ by
\begin{equation}\label{S}
\S:=\W_0\oplus\rho^{p\beta}\V.
\end{equation}

\begin{lemma}\label{dense} 
The subspace $\S$ defined above is dense in $L^q(\Omega,\R^{n\times n},\rho^{-\beta})$. 
\end{lemma}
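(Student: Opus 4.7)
The plan is to reduce the density of $\S$ in $L^q(\O,\R^{n\times n},\rho^{-\beta})$ to the density of $\W_0$ in $\W$ with respect to the $L^q(\O,\rho^{-\beta})$ norm. This reduction is immediate from Lemma \ref{sum}: the decomposition $L^q(\O,\Rnn,\rho^{-\beta}) = \W \oplus \rho^{p\beta}\V$ is topological (the projection onto $\W$ is bounded), and $\rho^{p\beta}\V$ is already a subspace of $\S$, so approximating the $\W$--component of any $\tF$ by elements of $\W_0$ produces an approximation of $\tF$ by elements of $\S$.

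Given $\tg \in \W$, I would fix an increasing exhaustion of $\O$ by compact sets $K_k$ chosen so that each $K_k$ meets only finitely many extended Whitney cubes $\O_t$ and $\bigcup_k K_k = \O$. Dominated convergence yields $\tg\chi_{K_k} \to \tg$ in $L^q(\O,\rho^{-\beta})$, and the support of $\tg\chi_{K_k}$ intersects only finitely many $\O_t$. The obstruction is that $\tg\chi_{K_k}$ may fail the orthogonality condition defining $\W$: for a fixed basis $\{\tvfi_1,\ldots,\tvfi_m\}$ of $\V$, the defects
\begin{equation*}
c_{k,j} := \int_\O (\tg\chi_{K_k}):\tvfi_j \;=\; -\int_{\O\setminus K_k}\tg:\tvfi_j
\end{equation*}
are generally nonzero, but tend to $0$ as $k \to \infty$ by dominated convergence, using $\tvfi_j \in L^\infty(\O,\Rnn)$ and $\rho^{p\beta} \in L^1(\O)$ to bound $|c_{k,j}|$ by $\|\tg\chi_{\O\setminus K_k}\|_{L^q(\O,\rho^{-\beta})}\|\tvfi_j\|_{L^p(\O,\rho^\beta)}$.

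To cancel the defects without enlarging the Whitney support, I would fix one extended Whitney cube $Q_0 = \O_{t_0}$ and construct functions $\tilde{u}_1,\ldots,\tilde{u}_m \in L^q(Q_0,\Rnn)$ (extended by zero to $\O$) satisfying $\int_{Q_0} \tilde{u}_i : \tvfi_j = \delta_{ij}$. Since the elements of $\V$ are matrix-valued polynomials of degree at most one and a nonzero such polynomial cannot vanish identically on a nonempty open cube, the restriction map $\V \to L^p(Q_0,\Rnn)$ is injective; by duality and the finite dimensionality of $\V$, the pairing $L^q(Q_0,\Rnn) \times \V \to \R$ induces a surjection $L^q(Q_0,\Rnn) \to \V^*$, providing the desired dual system. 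Because $\rho$ is bounded below on the Whitney cube $Q_0$, the $\tilde{u}_j$ automatically lie in $L^q(\O,\rho^{-\beta})$. Setting
\begin{equation*}
\tg_k := \tg\chi_{K_k} - \sum_{j=1}^m c_{k,j}\,\tilde{u}_j,
\end{equation*}
one checks that $\int_\O \tg_k:\tvfi_j = 0$ for every $j$, so $\tg_k \in \W$; that $\text{supp}(\tg_k) \subset K_k \cup Q_0$ meets only finitely many $\O_t$, so $\tg_k \in \W_0$; and that $\tg_k \to \tg$ in $L^q(\O,\rho^{-\beta})$ since $\tg\chi_{K_k}\to \tg$ there and the correction tends to zero as $c_{k,j} \to 0$.

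The only delicate step is the construction of the dual system $\{\tilde{u}_j\}$, but this is an elementary finite-dimensional fact once the injectivity of the restriction $\V \to L^p(Q_0,\Rnn)$ is noted; everything else is dominated convergence and bookkeeping.
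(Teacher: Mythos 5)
Your proposal is correct and follows essentially the same route as the paper: reduce via Lemma \ref{sum} to the density of $\W_0$ in $\W$, truncate to a set meeting finitely many $\O_t$, and cancel the finitely many moment defects against $\V$ by a dual system supported on a fixed cube (the paper's correction functions $\chi_Q\rho^{p\beta}\tnu_i$, with $\{\tnu_i\}$ orthonormal for the $Q$-localized weighted inner product, are exactly such a dual system). Your abstract construction of the dual system via injectivity of the restriction $\V\to L^p(Q_0,\Rnn)$ is a valid, minor variant of the same idea.
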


\begin{proof} By Lemma \ref{sum}, it is sufficient to show that $\W_0$ is dense in $\W$ with respect to the norm in $L^q(\Omega,\R^{n\times n},\rho^{-\beta})$. 

Let $Q\subset\Omega$ be a cube that intersects a finite collection of $\Omega_t$ and $\{\tnu_j\}_{1\leq j\leq m}$ an orthogonal basis of the finite dimensional space $\V$ with respect to the inner product 
\[\langle \tpsi,\tvfi\rangle_Q=\int_Q \tpsi(x) : \tvfi(x)\, \rho^{p\beta}(x)\,\d x.\] 
Now, given $\th\in\W$ notice that $\int_\Omega \th : \tnu_j=0$ for all $j$, for being $\tilde{\nu}_j$ a function in $\V$. Next, given $\epsilon>0$, let $\Omega_\epsilon\subset \Omega$ be
an open set that contains $Q$, intersects a finite number of $\Omega_t$ and 
\[\|(1-\chi_{\Omega_\epsilon})\th\|_{L^q(\O,\rho^{-\beta})}<\epsilon.\]
Thus, we define the function $\tg$ by
\begin{align*}
\tg(x):=\chi_{\Omega_\epsilon}(x)\th(x)+\sum_{i=1}^m\chi_Q(x)\rho^{p\beta}(x) \tnu_i(x)\int_{\Omega\setminus\Omega_\epsilon}\th(y) : \tnu_i(y)\,\d y.
\end{align*}
Notice first that the support of $\tg$ intersects a finite number of $\O_t$, and $\int_\Omega \tg : \tnu_j=0$ for all $j$, thus $\tg$ belongs to $\W_0$. Moreover,
\begin{eqnarray*}
\|\th-\tg\|_{L^q(\Omega,\rho^{-\beta})}&\leq& \epsilon+\sum_{i=1}^m\left\|\chi_Q(x)\rho^{p\beta}(x)\tnu_i(x)\int_{\Omega\setminus\Omega_\epsilon}\th(y) : \tnu_i(y)\,\d y\right\|_{L^q(\Omega,\rho^{-\beta})}\\
&\leq& \epsilon+\sum_{i=1}^m\left(\int_{\Omega\setminus\Omega_\epsilon}|\th(y) : \tnu_i(y)|\,\d y\right)\|\tnu_i\rho^{p\beta}\|_{L^q(Q,\rho^{-\beta})}\\
&\leq& \epsilon+\sum_{i=1}^m \|\th\|_{L^q({\Omega\setminus\Omega_\epsilon},\rho^{-\beta})}  \|\tnu_i\|_{L^p(\Omega,\rho^\beta)}    \|\tnu_i\rho^{p\beta}\|_{L^q(Q,\rho^{-\beta})} \\
&\leq& \epsilon\left(1+\sum_{i=1}^m\|\tnu_i\|_{L^p(\Omega,\rho^\beta)}    \|\tnu_i\rho^{p\beta}\|_{L^q(Q,\rho^{-\beta})}  \right),
\end{eqnarray*}
which proves that $\S$ is dense in $L^q(\Omega,\Rnn,\rho^{-\beta})$.
\end{proof}

We are ready to prove the main result of this article.

\begin{proof} [Proof of Theorem \ref{gkJohn}] Let $\vv$ be an arbitrary vector field in $W^{1,p}(\O,\R^n,\rho^\beta)$. Next, let us take $\ww$ in $\Sigma$, the kernel of $l$, such that   
\[\int_\O (D\vv:\tvfi)\,\rho^{p\beta}=\int_\O (D\ww:\tvfi)\,\rho^{p\beta}\]
for all $\tvfi\in \V$. Recall that the elements in $\V$ are the differential matrix of the vector fields in (\ref{Kernel}), thus $D\ww$ is the orthogonal projection of $D\uu$ on $\V$ with respect to the inner product used above. Moreover, $\beta\geq 0$ and $\O$ is bounded, then $\ww$ belongs to $W^{1,p}(\O,\R^n,\rho^\beta)$.  Hence, by taking $\uu:=\vv-\ww$, it is sufficient to prove 
\begin{align*}
\left(\int_\Omega |D\uu|^p\rho^{p\beta}\, \d x\right)^{1/p} \leq C \left(\int_\Omega |l(\uu)|^p\rho^{p\beta}\, \d x\right)^{1/p},
\end{align*}
for all $\uu\in W^{1,p}(\O,\R^n,\rho^\beta)$ which satisfy
\[\int_\O (D\uu:\tvfi)\,\rho^{p\beta}=0\]
for all $\tvfi\in \V$. For simplicity, we preferred to write the generalized version of Korn inequality in our main theorem by using the infimum over $l(\ww)=0$, however, it is also valid for vector fields verifying the condition stated above, which is also very standard in this kind of inequalities.

Now, using that the space $\S$ defined in (\ref{S}) is dense in $L^q(\O,\Rnn,\rho^{-\beta})$, it is enough to  show that 
there is a constant $C$ such that 
\begin{align*}
\int D\uu : (\tg+\rho^{p\beta}\tpsi) \leq C \left(\int_\Omega |l(\uu)|^p\rho^{p\beta}\, \d x\right)^{1/p},
\end{align*}
for any arbitrary function $\tg+\rho^{p\beta} \tpsi$ in $\S$, with $\|\tg+\rho^{p\beta}\tpsi\|_{L^q(\O,\rho^{-\beta})}\leq 1$. Thus, given a function $\tg+\rho^{p\beta} \tpsi$ with norm less than one, let us take a $\V$-decomposition $\{\tg_t\}_{t\in\Gamma}$ of $\tg$ (see Lemma \ref{DecompSimple}). Thus, 
\begin{align*}
\int_{\Omega} D\uu:(\tg+\rho^{p\beta}\tpsi)&=\int_{\Omega} D\uu:\tg\\
&=\int_{\Omega_t} D\uu:\sum_{t\in\Gamma} \tg_t\\
&=\sum_{t\in\Gamma}\int_{\Omega_t} D\uu: \tg_t=(1).
\end{align*}
Notice that in the last identity we used the finiteness of the sum stated in Lemma \ref{DecompSimple}. Now, $\tg_t$ satisfies that $\int_{\Omega_t} D\ww:\tg_t=0$ for all $\ww\in\Sigma$. Thus, from H\"older inequality, property $(2)$ in Whitney decomposition's definition and (\ref{infwKorn}) we obtain
\begin{align*}
(1)&\leq \sum_{t\in\Gamma} \inf_{l(\ww)=0}\|D\uu-D\ww\|_{L^p(\O_t,\rho^\beta)}\|\tg_t\|_{L^{q}(\O_t,\rho^{-\beta})}\\
&\leq \sum_{t\in\Gamma} C(\diam(\O_t))^\beta \inf_{l(\ww)=0}\|D\uu-D\ww\|_{L^p(\O_t)}\|\tg_t\|_{L^{q}(\O_t,\rho^{-\beta})}\\
&\leq C \sum_{t\in\Gamma} (\diam(Q_t))^\beta \|l(\uu)\|_{L^p(\O_t)}\|\tg_t\|_{L^{q}(\O_t,\rho^{-\beta})}\\
&\leq C \sum_{t\in\Gamma} \|l(\uu)\|_{L^p(\O_t,\rho^\beta)}\|\tg_t\|_{L^{q}(\O_t,\o^{-\beta})}=(2).
\end{align*}
Next, we use H\"older inequality for the sum depending on $t$ to obtain
\begin{align*}
(2)&\leq C \left(\sum_{t\in\Gamma} \int_{\Omega_t} | l(\uu)|^p\rho^{p\beta}\right)^{1/p}\left(\sum_{t\in\Gamma} \|\tg_t\|_{L^{q}(\O_t,\rho^{-\beta})}^q\right)^{1/{q}}=(3).
\end{align*} 
Using that each cube $\O_t$ intersects no more than $12^n$ cubes in $\{\O_s\}_{s\in\Gamma}$, and Lemma \ref{DecompSimple} we conclude
\begin{align*}
(3)&\leq C\,C_0\,\|l(\uu)\|_{L^p(\O,\rho^\beta)} \|\tg\|_{L^q(\O,\rho^{-\beta})}\leq C\,C_0\, C_1 \|l(\uu)\|_{L^p(\O,\o)}, 
\end{align*}  where $C$ is independent of $\O$,  $C_0$ is the estimate in (\ref{Estimation}) and $C_1$ is the constant in (\ref{dense cons}). 
\end{proof}

In the following two corollaries we generalize the Korn type inequalities proved in \cite{R1,Da} to bounded John domains.

\begin{corollary}\label{ReshonJohn} Let $\O\subset\R^n$ be a bounded John domain, with $n\geq 3$, $1<p<\infty$ and  $\Pi:W^{1,p}(\O,\R^n)\to \Sigma$ a projection, where $\Sigma$ is endowed with the topology of $L^p(\O,\R^n)$. Then, there is a constant $C$ such that 
\begin{equation*}
\|\vv-\Pi(\vv)\|_{W^{1,p}(\O)}\leq C\|l(\vv)\|_{L^p(\O)},
\end{equation*}
for all $\vv\in W^{1,p}(\O,\R^n)$.
\end{corollary}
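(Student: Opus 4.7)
The plan is to upgrade the gradient-level inequality provided by Theorem \ref{gkJohn} (with $\beta=0$) to a full $W^{1,p}$ estimate with the projection, by combining three ingredients: (i) attainment of the infimum over the finite-dimensional kernel $\Sigma$, (ii) the Poincar\'e inequality on John domains to pass from $\|D(\vv-\ww)\|_{L^p}$ to $\|\vv-\ww-c\|_{L^p}$ for a suitable constant $c$, and (iii) the fact that any projection onto a finite-dimensional subspace is bounded.

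First I would apply Theorem \ref{gkJohn} with $\beta=0$. Since $n\geq 3$, the space $\Sigma$ is finite-dimensional of dimension $\frac{(n+1)(n+2)}{2}$ and closed in $L^p(\O,\R^n)$, so the functional $\ww\mapsto \|D\vv-D\ww\|_{L^p(\O)}$ attains its infimum over $\Sigma$ at some $\ww_1\in\Sigma$. This yields
\begin{align*}
\|D\vv-D\ww_1\|_{L^p(\O)}\leq C\|l(\vv)\|_{L^p(\O)}.
\end{align*}

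Next, I would invoke the classical Poincar\'e inequality, valid on any bounded John domain: for every $u\in W^{1,p}(\O)$ there exists a constant $c_u\in\R$ such that $\|u-c_u\|_{L^p(\O)}\leq C\|\nabla u\|_{L^p(\O)}$. Applying this componentwise to $\vv-\ww_1$, I obtain a constant vector $c\in\R^n$ with
\begin{align*}
\|\vv-\ww_1-c\|_{L^p(\O)}\leq C\|D\vv-D\ww_1\|_{L^p(\O)}\leq C\|l(\vv)\|_{L^p(\O)}.
\end{align*}
Constants belong to $\Sigma$ (take $A=0$, $\lambda=0$, $b=0$, $a=c$ in (\ref{Kernel})), so $\tilde{\ww}:=\ww_1+c\in\Sigma$, and combining the last two displays gives $\|\vv-\tilde{\ww}\|_{W^{1,p}(\O)}\leq C\|l(\vv)\|_{L^p(\O)}$.

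Finally, since $\Pi$ fixes $\Sigma$, we have $\vv-\Pi(\vv)=(I-\Pi)(\vv-\tilde{\ww})$. The projection $\Pi$ is continuous from $W^{1,p}(\O,\R^n)$ into $\Sigma$ with the $L^p$ topology; but $\Sigma$ is finite-dimensional, so its $L^p$ and $W^{1,p}$ topologies coincide, and consequently $\Pi:W^{1,p}(\O,\R^n)\to W^{1,p}(\O,\R^n)$ is bounded. Therefore
\begin{align*}
\|\vv-\Pi(\vv)\|_{W^{1,p}(\O)}\leq \|I-\Pi\|\,\|\vv-\tilde{\ww}\|_{W^{1,p}(\O)}\leq C\|l(\vv)\|_{L^p(\O)},
\end{align*}
which is the claim. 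No step is genuinely hard: the only point requiring care is the attainment of the infimum in Theorem \ref{gkJohn} (handled by finite-dimensionality of $\Sigma$) and the use of the Poincar\'e inequality on John domains, which is a standard result.
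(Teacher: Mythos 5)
Your proof is correct and uses essentially the same ingredients and structure as the paper's: Theorem \ref{gkJohn} with $\beta=0$, the Poincar\'e inequality on John domains, the equivalence of the $L^p$ and $W^{1,p}$ norms on the finite-dimensional space $\Sigma$, and the projection property of $\Pi$. The only cosmetic differences are that the paper takes a $2$-near-minimizer $\ww_0$ pre-normalized to have the same mean as $\vv$ (rather than an exact minimizer adjusted by a constant afterwards) and writes the final bound via the triangle inequality $\|\vv-\Pi(\vv)\|\leq\|\vv-\ww_0\|+\|\Pi(\vv)-\Pi(\ww_0)\|$ instead of through the operator $I-\Pi$.
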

\begin{proof} Let $\ww_0\in\Sigma$ be such that
\begin{align*}
\|D\vv-D\ww_0\|_{L^p(\Omega)}\leq 2 \inf_{l(\ww)=0}\|D\vv-D\ww\|_{L^p(\Omega)}
\end{align*}
and $\int_\Omega \vv_i-{\ww_0}_i=0$ for all $1\leq i\leq n$.
Then, by using that $\Pi$ is a projection and the norms $\|\cdot \|_{W^{1,p}(\O)}$ or $\|\cdot\|_{L^p(\O)}$ are equivalent over $\Sigma$ for being $\Sigma$ a finite dimensional space, we have
\begin{align*}
\|\vv-\Pi(\vv)\|_{W^{1,p}(\Omega)}&\leq \|\vv-\ww_0\|_{W^{1,p}(\Omega)}+\|\Pi(\vv)-\Pi(\ww_0)\|_{W^{1,p}(\Omega)}\\
&\leq C\{\|\vv-\ww_0\|_{W^{1,p}(\Omega)}+\|\Pi(\vv)-\Pi(\ww_0)\|_{L^p(\Omega)}\}\\
&\leq C\|\vv-\ww_0\|_{W^{1,p}(\Omega)}.
\end{align*}
Finally, by using Poincar\'e inequality on $\O$ (see for example \cite{M}) and Theorem \ref{gkJohn} with $\beta=0$ we conclude 
\begin{align*}
\|\vv-\Pi(\vv)\|_{W^{1,p}(\Omega)}&\leq C\|D\vv-D\ww_0\|_{L^p(\Omega)}\\
&\leq C\inf_{l(\ww)=0}\|D\vv-D\ww\|_{L^p(\Omega)}\\
&\leq C\|l(\vv)\|_{L^p(\O)}.
\end{align*}

\end{proof}

\begin{corollary}\label{DainhonJohn} Let $\O\subset\R^n$ be a bounded John domain, with $n\geq 3$. Then, there is a constant $C$ such that 
\begin{equation}\label{DainConsJohn}
\|\vv\|_{W^{1,2}(\O)}\leq C\{\|\vv\|_{L^2(\O)}+\|l(\vv)\|_{L^2(\O)}\},
\end{equation}
for all $\vv\in W^{1,2}(\O,\R^n)$.
\end{corollary}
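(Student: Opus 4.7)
The plan is to deduce this from Theorem \ref{gkJohn} with $p=2$ and $\beta=0$, following exactly the template used for Corollary \ref{ReshonJohn}. Given $\vv \in W^{1,2}(\O,\R^n)$, first pick $\ww_0 \in \Sigma$ realizing the infimum up to a factor of $2$, so that
\[
\|D\vv - D\ww_0\|_{L^2(\O)} \leq 2\inf_{l(\ww)=0}\|D\vv - D\ww\|_{L^2(\O)} \leq C\|l(\vv)\|_{L^2(\O)}.
\]
Next, shift $\ww_0$ by a suitable constant vector $a \in \R^n$; this keeps $\ww_0$ in $\Sigma$ and does not alter $D\ww_0$, so the bound above is preserved. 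Choose $a$ so that $\int_\O(\vv_i - (\ww_0)_i)\,\d x = 0$ for every $1 \leq i \leq n$.

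With this zero-mean normalization in force, the Poincar\'e inequality on $\O$ (valid for bounded John domains, see for instance \cite{M}) gives
\[
\|\vv - \ww_0\|_{L^2(\O)} \leq C\|D\vv - D\ww_0\|_{L^2(\O)} \leq C\|l(\vv)\|_{L^2(\O)},
\]
and the triangle inequality then yields $\|\ww_0\|_{L^2(\O)} \leq \|\vv\|_{L^2(\O)} + C\|l(\vv)\|_{L^2(\O)}$. Since $\Sigma$ is finite dimensional, the linear map $D \colon \Sigma \to L^2(\O,\R^{n\times n})$ is continuous, hence $\|D\ww_0\|_{L^2(\O)} \leq C\|\ww_0\|_{L^2(\O)}$. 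Assembling the pieces,
\[
\|D\vv\|_{L^2(\O)} \leq \|D\vv - D\ww_0\|_{L^2(\O)} + \|D\ww_0\|_{L^2(\O)} \leq C\{\|\vv\|_{L^2(\O)} + \|l(\vv)\|_{L^2(\O)}\},
\]
and adding $\|\vv\|_{L^2(\O)}$ on the left produces the claimed $W^{1,2}$ estimate.

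The argument is a three-step assembly: generalized Korn on John domains from Theorem \ref{gkJohn}, Poincar\'e on John domains, and finite-dimensional norm equivalence on $\Sigma$. There is no serious obstacle. The only subtlety worth flagging is that the failure of (\ref{DainCons}) in the planar case, which is caused by the infinite dimension of $\Sigma$ when $n=2$, does not arise here because we are working with $n \geq 3$, matching the hypothesis of Theorem \ref{gkJohn} that powered the whole chain.
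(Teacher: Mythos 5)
Your proof is correct, but it follows a different route from the one in the paper. The paper first establishes Corollary \ref{ReshonJohn}, then fixes a cube $Q\subset\O$, takes a projection $\Pi_Q:W^{1,2}(Q,\R^n)\to\Sigma$ (which extends to a projection on $W^{1,2}(\O,\R^n)$ by finite-dimensional norm equivalence on $\Sigma$), and closes the estimate by invoking the \emph{known validity} of (\ref{DainConsJohn}) on the cube $Q$ itself --- i.e.\ Dain's original result on Lipschitz domains --- to bound $\|\Pi_Q(\vv)\|_{L^2(Q)}$ by $\|\vv\|_{L^2(Q)}+\|l(\vv)\|_{L^2(Q)}$. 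You instead inline the normalization trick from the proof of Corollary \ref{ReshonJohn}: choose a near-minimizer $\ww_0\in\Sigma$, shift it by a constant (which stays in $\Sigma$ and leaves $D\ww_0$ untouched) so that $\vv-\ww_0$ has zero mean, apply Poincar\'e on the John domain, and then control $\|D\ww_0\|_{L^2(\O)}$ by $\|\ww_0\|_{L^2(\O)}$ via finite-dimensionality of $\Sigma$. Every step checks out: constants do lie in $\Sigma$ by (\ref{Kernel}), the $L^2(\O)$ norm is a genuine norm on the polynomial space $\Sigma$, and John domains satisfy the Poincar\'e inequality. What your argument buys is self-containedness --- it needs only Theorem \ref{gkJohn}, Poincar\'e, and linear algebra, with no appeal to Dain's theorem on a reference cube. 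What the paper's argument buys is a sharper conclusion: the lower-order term on the right-hand side is localized to $\|\vv\|_{L^2(Q)}$ for a fixed cube $Q\subset\O$ rather than $\|\vv\|_{L^2(\O)}$, and it is precisely this localized form (\ref{Counter}) that the paper later uses in the cuspidal counterexample. Your version proves the stated corollary but would not directly yield that refinement without rerunning the projection argument.
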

\begin{proof} Let $Q$ be a cube included in $\O$ and $\Pi_Q:W^{1,2}(Q,\R^n)\to \Sigma$ a projection. The norms $\|\cdot \|_{L^2(Q)}$ and $\|\cdot\|_{L^2(\O)}$ are equivalent over $\Sigma$, thus $\Pi_Q$ is also a projection from $W^{1,2}(\O,\R^n)$ to $\Sigma$. 
Hence, using Corollary \ref{ReshonJohn} over $\O$ and the validity of (\ref{DainConsJohn}) over the cube $Q$, we show for any $\vv$ in $W^{1,2}(\O,\R^n)$ that 
\begin{align*}
\|\vv\|_{W^{1,2}(\Omega)}&\leq \|\vv-\Pi_Q(\vv)\|_{W^{1,2}(\Omega)}+\|\Pi_Q(\vv)\|_{W^{1,2}(\Omega)}\\
&\leq C\{\|l(\vv)\|_{L^2(\Omega)}+\|\Pi_Q(\vv)\|_{L^2(Q)}\}\\
&\leq C\{\|l(\vv)\|_{L^2(\Omega)}+\|\vv\|_{W^{1,2}(Q)}\}\\
&\leq C\{\|l(\vv)\|_{L^2(\Omega)}+\|\vv\|_{L^2(Q)}+\|l(\vv)\|_{L^2(Q)}\}\\
&\leq C\{\|\vv\|_{L^2(Q)}+\|l(\vv)\|_{L^2(\Omega)}\}
\end{align*}
concluding the proof.
\end{proof}

To finalize this section we show an example that proves that the generalized Korn inequality (\ref{wgkJohn}) might fail if $\O$ is not a John domain. Thus, let us consider the case $p=2$, $\beta=0$ and the cuspidal domain $\O\subset\R^3$ given by  
\[\O:=\{(x_1,x_2,x_3)\in\R^3\,:\, 0<x_1,x_2<1\text{ and }0<x_3<x_2^\gamma\},\]
with $\gamma>1$. Let us assume by contradiction that (\ref{wgkJohn}) holds on $\O$, thus, following the ideas in the previous two corollaries, we can conclude that there is a constant $C$ such that 
\begin{align}\label{Counter}
\|D\vv\|_{L^2(\O)}\leq C\{\|\vv\|_{L^2(Q)}+\|l(\vv)\|_{L^2(\O)}\},
\end{align}
for all $\vv\in W^{1,2}(\O,\R^3)$, where $Q$ is a fixed cube in $\O$. Now, let us consider the vector field 
\[\vv(x_1,x_2,x_3):=(0,\,-(s+1)x_3x_2^s,\,x_2^{s+1}).\] 
By a straightforward calculation it can be seen that if $s$ satisfies that 
\begin{align*}
\max\left\{-\dfrac{(\gamma+1)}{2}-(\gamma-1),-\dfrac{(\gamma+1)}{2}-1\right\} < s <-\dfrac{(\gamma+1)}{2}
\end{align*}
then the left hand side of (\ref{Counter}) is infinite while the right one is finite following in a contradiction. 

This kind of counterexamples has been studied in \cite{ADL} to show that the Korn inequality \begin{align}\label{Korn}
\|D\vv\|_{L^2(\O)}\leq C\{\|\vv\|_{L^2(\O)}+\|\varepsilon(\vv)\|_{L^2(\O)}\},
\end{align}
fails on certain cuspidal domains of the style of $\O$. The fact that this counterexample also works for the generalized version of Korn can also be concluded by observing that 
\begin{align*}
\|l(\vv)\|_{L^2(\O)}^2=\|\varepsilon(\vv)\|_{L^2(\O)}^2-\frac{1}{n}\|{\rm div}\,\vv\|^2_{L^2(\O)}\leq \|\varepsilon(\vv)\|_{L^2(\O)}^2
\end{align*}
which implies that (\ref{Counter}) fails to be true when (\ref{Korn}) does.

\section{$\V$-decomposition of functions}
\label{V-decomp}
\setcounter{equation}{0}

The $\V$-decomposition of functions introduced in Lemma \ref{DecompSimple} is constructed by using an inductive argument based on a certain partial order on the Whitney cubes $\{Q_t\}_{t\in\Gamma}$.

Let us denote by $G=(V,E)$ a graph with vertices $V$ and edges $E$. Graphs in these notes do not have neither multiple edges nor loops and the number of vertices in $V$ is countable. Moreover, each vertex is of finite degree, i.e. only a finite number of vertices emanate from each vertex. A {\it rooted tree} (or simply a tree) is a connected graph $G=(V,E)$ in which any two vertices are connected by exactly one simple path, and a {\it root} is simply a distinguished vertex $a\in V$. Moreover, if $G=(V,E)$ is a rooted tree, it is possible to define a {\it partial order} ``$\preceq$" in $V$ as follows: $s\preceq t$ if and only if the unique path connecting $t$ with the root $a$ passes through $s$. The {\it height} or {\it level} of any $t\in V$ is the number of vertices in $\{s\in V\,:\,s\preceq t\text{ with }s\neq t\}$. {\it The parent} of a vertex $t\in V$ is the vertex $s$ satisfying that $s\preceq t$ and its height is one unit smaller than the height of $t$. We denote the parent of $t$ by $t_p$. 
It can be seen that each $t\in V$ different from the root has a unique parent, but several elements in $V$ could have the same parent. Note that two vertices are connected by an edge ({\it adjacent vertices}) if one is the parent of the other.

Now, if $\O\subset\R^n$ is a bounded domain and $\{Q_t\}_{t\in\Gamma}$ a Whitney decomposition, we define the connected graph 
\begin{align*}
G_\Gamma=(\Gamma,E_\Gamma)
\end{align*} 
where the set of vertices is the set of subindexes $\Gamma$ and two arbitrary $s,t$ in $\Gamma$ are connected by an edge iff $Q_s$ and $Q_t$ are $(n-1)$- neighbors. 

\begin{defi}\label{TreeStructure} A {\it tree structure} of $\Gamma $ is given by a collection of edges $E\subset E_\Gamma$ and a distinguished vertex $a\in\Gamma$ such that the subgraph $G=(\Gamma,E)$ of $G_\Gamma$ is a rooted tree.
\end{defi}

There are different tree structures that can be added to $\Gamma$. For example, we can define one such that the path that connects each vertex $t$ with the root $a$ has minimal length. This kind of paths are not unique thus the tree structure must be defined inductively, level by level, by choosing a path with minimal length. This example was considered in \cite{L1} and it can be done for any arbitrary proper domain in $\R^n$ without any assumption on the geometry. 

In the following picture we sketch another example that shows some cubes of a Whitney decomposition of a circle and how a tree structure looks like. 
\begin{figure}[htb]
       \center{\includegraphics[width=70mm]{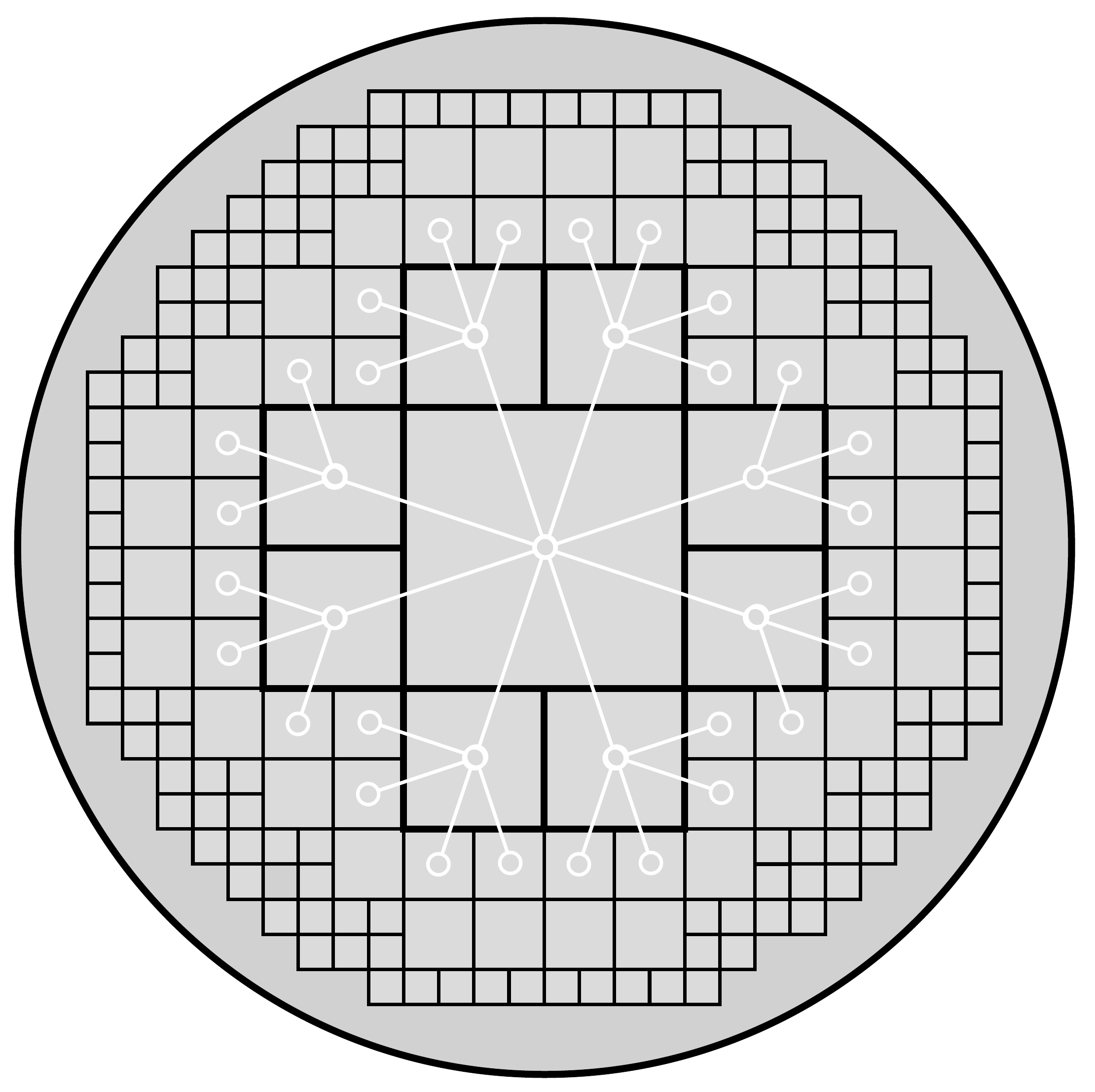}}
        \caption{ Some Whitney cubes $Q_t$ of a circle and a tree structure}
\end{figure}

\begin{defi}\label{OperT} Given a Whitney decomposition $\{Q_t\}_{t\in\Gamma}$ of a domain $\Omega\subset\R^n$ and a tree structure of $\Gamma$, we denote by $\{B_t\}_{t\neq a}$ a collection of open pairwise disjoint cubes with sides parallel to the axis such that $B_t\subseteq\O_{t}\cap\O_{t_p}$ and $|\O_t|\leq C_n |B_t|$ for all $t\in\Gamma$. This collection of cubes exists by following the properties for Whitney cubes described in Section \ref{Prel}. Thus, using the tree structure of $\Gamma$, we define {\it the Hardy type operator} $T$ for functions in $L^1(\O)$ by:
\begin{align}\label{Ttree}
Tg(x):=\sum_{a\neq t\in\Gamma}\dfrac{\chi_{t}(x)}{|W_t|}\int_{W_t}|g|,
\end{align} 
where $\{\O_t\}_{t\in\Gamma}$ is the collection of extended Whitney cubes associated to $\{Q_t\}_{t\in\Gamma}$,  $\chi_t(x)$ is the characteristic function of $B_t$, for all $t\neq a$, and $\displaystyle{W_t=\bigcup_{s\succeq t} \O_s}$.
\end{defi}
We refer to $W_t$ by {\it the shadow of }$\Omega_t$. This is a fairly known name and it follows the assumption that light travels from $\O_a$ to the different cubes $\O_t$ along the unique path that connects them. This geometric interpretation was taken from \cite{SS}, page 81, in the context of quasi-hyperbolic geodesics and chains of Whitney cubes with minimal number of cubes.

Now, if $\O$ is a John domain and $\{Q_t\}_{t\in\Gamma}$ is a Whitney decomposition of $\O$, it is possible to choose a tree structure for $\Gamma$ which satisfies a certain geometric property. See the following lemma which has been proved in \cite{L2}.

\begin{lemma}\label{tcovJohn} Given a bounded John domain $\O$ and a Whitney decomposition $\{Q_t\}_{t\in\Gamma}$, there exists a constant $K>1$ and a tree structure for $\Gamma$, with root ``$a$", that satisfies 
\begin{align}\label{Boman tree}
Q_s\subseteq KQ_t,
\end{align}
for any $s,t\in\Gamma$ with $s\succeq t$. In other words, the shadow of $Q_t$ is contained in $KQ_t$. 
\end{lemma}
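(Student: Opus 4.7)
The plan is to build the tree explicitly from John curves. Fix the John center $x_0\in\O$ with John constant $c_J$ and let $Q_a$ be a Whitney cube containing $x_0$; this will be the root. For each $t\ne a$, pick $y_t\in Q_t$ and a John curve $\gamma_t:[0,\ell_t]\to\O$ joining $y_t$ to $x_0$ parametrized by arclength. As $\gamma_t$ travels toward $x_0$ it successively enters a finite sequence of Whitney cubes $Q_t=Q_{t^{(0)}},Q_{t^{(1)}},\dots,Q_{t^{(N_t)}}=Q_a$, and a small generic perturbation of $\gamma_t$ inside the open overlap sets $\O_s\cap\O_{s'}$ of the expanded cubes $\O_s=\frac98 Q_s^\circ$ ensures that consecutive members of this sequence are actually $(n-1)$-neighbors. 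Each index $t$ is thus equipped with a canonical chain of $(n-1)$-neighbors from $t$ to $a$.

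Build the tree by induction on $N_t$. The root is $Q_a$, and for $t\ne a$ set $t_p:=t^{(1)}$. Since the canonical chain attached to $t^{(1)}$ can be taken to be $t^{(1)},t^{(2)},\dots,t^{(N_t)}$, one has $N_{t^{(1)}}\le N_t-1$, so the parent relation has no cycles and is defined on all of $\Gamma$, producing a rooted tree with root $a$ in the sense of Definition~\ref{TreeStructure}; by construction two adjacent vertices correspond to $(n-1)$-neighboring Whitney cubes.

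The shadow bound $Q_s\subseteq KQ_t$ for $s\succeq t$ now follows from the John condition. By construction every ancestor of $s$ in the tree lies on the canonical chain of $s$, so there exists $s_t\in[0,\ell_s]$ with $\gamma_s(s_t)\in Q_t$. The John inequality gives $\mathrm{dist}(\gamma_s(s_t),\partial\O)\ge s_t/c_J$, while property~(2) of the Whitney decomposition yields $\mathrm{dist}(\gamma_s(s_t),\partial\O)\le \mathrm{dist}(Q_t,\partial\O)+\diam(Q_t)\le 5\diam(Q_t)$; hence $s_t\le 5c_J\diam(Q_t)$. Combining this with the triangle inequality $\mathrm{dist}(y_s,\partial\O)\le\mathrm{dist}(\gamma_s(s_t),\partial\O)+s_t$ and the Whitney upper bound $\diam(Q_s)\le\mathrm{dist}(Q_s,\partial\O)\le\mathrm{dist}(y_s,\partial\O)$ yields $\diam(Q_s)\le C(c_J)\diam(Q_t)$. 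Consequently, for any $z\in Q_s$ and any $y\in Q_t$,
\begin{equation*}
|z-y|\le \diam(Q_s)+s_t+\diam(Q_t)\le K\diam(Q_t),
\end{equation*}
so $Q_s\subseteq KQ_t$ with $K$ depending only on $c_J$ and $n$.

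The main obstacle is ensuring that the chain-of-cubes construction induced by each John curve $\gamma_t$ is consistent enough to yield a single coherent tree on all of $\Gamma$: the $(n-1)$-neighbor structure along each curve must be guaranteed by the perturbation argument, and the parent assignments coming from different curves must be mutually compatible (no cycles), which is the content of the induction on $N_t$. Once the tree is in place the shadow estimate is a direct consequence of the linear-in-arclength lower bound for $\mathrm{dist}(\gamma_s(\cdot),\partial\O)$ supplied by the John condition; as the cuspidal example at the end of Section~\ref{proof} shows, no such uniform shadow inequality can hold on non-John domains.
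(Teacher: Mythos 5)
The paper does not actually prove Lemma \ref{tcovJohn}; it only cites \cite{L2} for it, so your argument has to stand on its own. Its quantitative core is fine: the computation showing that if $Q_t$ lies on the chain of Whitney cubes traced by a John curve emanating from $Q_s$, then $\diam(Q_s)\leq C\diam(Q_t)$ and $\mathrm{dist}(Q_s,Q_t)\leq C\diam(Q_t)$, hence $Q_s\subseteq KQ_t$, is correct, and it is the estimate on which any proof of this lemma rests.

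The genuine gap is in the passage from the family of per-cube chains to a single tree. You define the parent of $t$ as the second cube $t^{(1)}$ on $t$'s own chain, and then assert both that $N_{t^{(1)}}\leq N_t-1$ and, later, that ``every ancestor of $s$ in the tree lies on the canonical chain of $s$''. Both assertions require the chain assigned to $t^{(1)}$ to be the tail of the chain of $t$, and this coherence cannot be arranged in general: $t^{(1)}$ typically serves as the first step for several cubes $t,t',\dots$ whose John curves continue toward $x_0$ along different routes, and it also carries its own John curve; its single chain can agree with at most one of these tails. Consequently the grandparent of $t'$ in your tree need not lie on the chain of $t'$, and the John-curve estimate then gives no control of $Q_{t'}$ by that ancestor. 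Without coherence all that survives is the one-step inclusion $Q_{s_j}\subseteq KQ_{s_{j+1}}$ along the tree path, and composing these multiplies the constant at every generation, so no uniform $K$ is obtained; for the same reason the acyclicity of the parent map is not justified. This compatibility of the chains is precisely the nontrivial content of the lemma and is what \cite{L2} (see also \cite{DRS}) is devoted to establishing: one must either reselect the chains so that each extends its parent's chain while remaining a John-type chain, or prove the shadow bound without assuming that ancestors lie on the original chain. The $(n-1)$-neighbor issue, by contrast, is minor: between two touching Whitney cubes of comparable size one can always insert a chain of boundedly many $(n-1)$-neighbors of comparable size, which only changes $K$ by a dimensional factor.
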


From now on $\Omega\subset\R^n$ is a bounded John domain, $\{\O_t\}_{t\in\Gamma}$ is the collection of extended Whitney cubes defined in Section \ref{Prel} and $\Gamma$ has a tree structure with the geometric property introduced in Lemma \ref{tcovJohn}.

\begin{lemma}\label{weighted T}
Let $\O\subset\R^n$ be a bounded John domain, $\beta\geq 0$ and $1<q<\infty$. Then, the operator $T$ defined in (\ref{Ttree}) is continuous from $L^q(\O,\rho^{-\beta})$ to itself, where $\rho$ is the distance to the boundary of $\O$. Moreover, its norm is bounded by 
\[\|T\|_{L\to L} \leq C K^\beta,\]
where $L$ denotes $L^q(\O,\rho^{-\beta})$. The constant $C$ in the previous inequality is independent of $\O$ and $K$ is the constant introduced in (\ref{Boman tree}).
\end{lemma}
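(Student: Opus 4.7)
The plan is to decouple the weight $\rho^{-\beta}$ from the averaging defining $T$, reducing matters to an unweighted tree--Hardy estimate, and then to establish that unweighted estimate by a duality/telescoping argument along the tree.

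\emph{Step 1 (reduction to $\beta=0$).} For $y\in B_t$, the Whitney properties together with $B_t\subseteq\O_t\cap\O_{t_p}$ give $\rho(y)\approx\diam(Q_t)$, while the Boman-type property in Lemma \ref{tcovJohn} gives $W_t\subseteq KQ_t$, and hence $\rho(x)\leq CK\diam(Q_t)\leq CK\rho(y)$ for every $x\in W_t$. Writing $|g(x)|=\bigl(|g(x)|\rho^{-\beta}(x)\bigr)\rho^\beta(x)$ in the defining integral of $Tg(y)$, this size comparison produces the pointwise bound
\[
\rho^{-\beta}(y)\,Tg(y)\;\leq\;(CK)^\beta\,\tilde T\bigl(|g|\rho^{-\beta}\bigr)(y),\qquad y\in B_t,
\]
where $\tilde T$ has the same defining formula as $T$ but is viewed as acting on $L^q(\O)$ (the unweighted version). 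Since $g\mapsto|g|\rho^{-\beta}$ is an isometry from $L^q(\O,\rho^{-\beta})$ onto $L^q(\O)$, I am reduced to showing $\|\tilde T\|_{L^q(\O)\to L^q(\O)}\leq C$ with $C$ depending only on $n$ and $q$.

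\emph{Step 2 (unweighted estimate).} Because the sets $\{B_t\}_{t\neq a}$ are pairwise disjoint,
\[
\|\tilde T G\|_{L^q(\O)}^q=\sum_{t\neq a}|B_t|\left(\frac{1}{|W_t|}\int_{W_t}|G|\right)^{q}.
\]
I would dualize: testing against $h\in L^{q'}(\O)$ gives $\langle\tilde T G,h\rangle=\int_\O |G|\,\tilde T^{*}h$, where
\[
\tilde T^{*}h(x)=\sum_{t\preceq s(x),\,t\neq a}\frac{1}{|W_t|}\int_{B_t}h
\]
and $s(x)$ denotes the Whitney index with $x\in\O_{s(x)}$, the sum running along the tree chain joining $s(x)$ to the root. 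The plan is to dominate $\tilde T^{*}h(x)$ by a Hardy--Littlewood-type maximal average $Mh(x)$ by exploiting that the shadows $\{W_{t_j}\}$ along the chain are nested with geometrically decreasing volumes in a John domain, so that the chain sum telescopes; the $L^{q'}$-boundedness of $M$ then yields $\|\tilde T\|_{L^q\to L^q}\leq C$. Combining with Step 1 produces $\|T\|_{L\to L}\leq CK^\beta$.

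\emph{Main obstacle.} The delicate ingredient is Step 2 with a constant independent of $K$. A one-shot comparison $\tfrac{1}{|W_t|}\int_{W_t}|G|\leq\tfrac{|KQ_t|}{|W_t|}MG(y)$ costs a factor of order $K^{n}$ and would spoil the announced $K^\beta$ dependence. Avoiding this loss requires a genuine use of the tree structure --- in effect a Hardy inequality on the tree built from Lemma \ref{tcovJohn} --- so that the chain sum defining $\tilde T^{*}h(x)$ is controlled by a single maximal evaluation rather than by summing $L$ independent contributions.
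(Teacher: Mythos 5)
First, be aware that the paper does not actually prove this lemma: it is quoted from \cite{L2}, so there is no internal argument here to measure your proposal against. Your Step 1 is correct and is the natural way to extract the factor $K^\beta$: for $y\in B_t$ one has $\rho(y)\approx\diam(Q_t)$, while (\ref{Boman tree}) gives $W_t\subseteq KQ_t$ and hence $\rho(x)\leq CK\rho(y)$ for all $x\in W_t$; since $\beta\geq 0$ this yields the pointwise bound $\rho^{-\beta}(y)\,Tg(y)\leq (CK)^{\beta}\,\tilde{T}(|g|\rho^{-\beta})(y)$, and the lemma reduces to the unweighted ($\beta=0$) boundedness of $\tilde{T}$ with a constant independent of $K$.

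That unweighted bound is the entire content of the lemma, and your Step 2 does not establish it: it is a plan whose key mechanism fails. The shadows along a chain are nested, but their volumes need not decrease geometrically: along any stretch of the tree consisting of $L$ comparably sized Whitney cubes (a corridor-like portion of $\O$, or a run of cubes at comparable distance to $\partial\O$; the constraint $W_t\subseteq KQ_t$ only forces $L\leq CK^n$), the ratios $|W_{t_{j+1}}|/|W_{t_j}|$ are of order $1-1/(L-j)$ and approach $1$, so nothing telescopes. Consequently the pointwise domination $\tilde{T}^{*}h\leq C\,Mh$ you aim for is false: taking $h=\sum_j\chi_{B_{t_j}}$ along such a chain gives, for $y$ in the last cube of the chain, $\tilde{T}^{*}h(y)=\sum_{j}|B_{t_j}|/|W_{t_j}|\approx\sum_j (L-j+1)^{-1}\approx\log L$, while $Mh(y)\leq 1$. (In this example $\|\tilde{T}^{*}h\|_{L^{q'}}\leq C\|h\|_{L^{q'}}$ still holds, so the lemma is not contradicted; the point is that no pointwise maximal-function comparison can prove it, and since the loss grows with $K$ it cannot produce the claimed $K$-independent constant.) What is needed instead is a genuine $L^{q}$ (or dual $L^{q'}$) estimate for the discrete Hardy operator on the tree, exploiting that the cubes $B_t$ are pairwise disjoint with $\bigcup_{k\succeq t}B_k\subseteq W_t$ and $|B_t|\approx|\O_t|$; this is precisely the estimate proved in \cite{L1,L2}, which is why the paper defers to \cite{L2} here. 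As written, your argument proves the lemma only modulo that estimate, which you have correctly identified as the main obstacle but have not supplied.
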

This result was proved in \cite{L2}.

Now, we are ready to construct the $\V$-decomposition.

\begin{lemma}\label{Decomp} Let $\Omega\subset\R^n$ be a bounded John domain and $\{\O_t\}_{t\in \Gamma}$ a collection of extended Whitney cubes. Given $\tg\in L^1(\Omega,\R^{n\times n})$ such that $\int_\O \tg:\tvfi=0$, for all $\tvfi\in \V$, and $supp(\tg)\cap \O_s\neq \emptyset$ only for a finite number of $s\in\Gamma$, there exists a collection of functions $\{\tg_t\}_{t\in\Gamma}$  in $L^1(\Omega,\Rnn)$ with the following properties:
\begin{enumerate}
\item $\tg=\sum_{t\in \Gamma} \tg_t.$
\item $supp(\tg_t)\subset\Omega_t.$
\item $\int_{\Omega_t} \tg_t:\tvfi=0$, for all $\tvfi\in \V.$
\end{enumerate}

Moreover, let $t\in\Gamma$. If $x\in B_s$, where $s=t$ or $s_p=t$, we have the following pointwise estimate 
\begin{align}\label{P12}
\|\tg_t\|_\infty(x)\leq \|\tg\|_\infty(x)+ C_n K^{n+1}T\|\tg\|_1(x),
\end{align}
where $K$ is the geometric constant introduced in (\ref{Boman tree}) and $C_n$ is a constant that depends only on $n$. Otherwise, if $x\not\in\bigcup_{s\in\Gamma} B_s$ or $x\in B_s$, where $s\neq t$ and $s_p\neq t$, then
\begin{align}\label{P11}
\|\tg_t\|_\infty(x)\leq \|\tg\|_\infty(x).
\end{align}
Finally, $\tg_t \equiv 0$ for all $t\in \Gamma\setminus\Gamma_{\tg}$, where $\Gamma_{\tg}$ is the subtree of $\Gamma$ with a finite number of vertices given by 
\begin{align*}
\Gamma_{\tg}:=\{s\in\Gamma\,:\,\ \text{there is }k\succeq s\text{ with }supp(\tg)\cap \O_k\neq\emptyset\}.
\end{align*}

\end{lemma}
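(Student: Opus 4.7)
The plan is to construct the pieces $\tg_t$ recursively along the tree $\Gamma_{\tg}$, working from the leaves toward the root. First fix a partition $\{U_t\}_{t\in\Gamma}$ of $\O$ with $U_t\subset Q_t\subset\O_t$ (e.g.\ the interiors of the Whitney cubes) and set $\tg_t^{(0)}:=\tg\,\chi_{U_t}$. Properties (1) and (2) then hold automatically. Since all subsequent modifications will take place only on the transfer cubes $\{B_s\}$, the value $\tg(x)$ is retained in $\tg_t(x)$ whenever $x\in U_t$; this supplies the initial summand $\|\tg\|_\infty(x)$ appearing in both (\ref{P11}) and (\ref{P12}). The only remaining task is to restore condition (3) by placing $\V$-valued corrections on the cubes $B_s\subset\O_s\cap\O_{s_p}$, which are legitimately shared between a node and its parent.

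Process the finite tree $\Gamma_{\tg}$ from its leaves upward. At each non-root $t\in\Gamma_{\tg}$ the bilinear pairing $(\tpsi,\tvfi)\mapsto\int_{B_t}\tpsi:\tvfi$ is nondegenerate on the finite-dimensional space $\V$, because its elements are matrix fields that are affine in $x$ and $B_t$ is a nonempty open cube. Hence there is a unique $\tpsi_t\in\V$ with $\int_{B_t}\tpsi_t:\tvfi=\int_{\O_t}\tg_t:\tvfi$ for every $\tvfi\in\V$. Replace $\tg_t$ by $\tg_t-\tpsi_t\chi_{B_t}$, which now obeys (3) on $\O_t$, and add $\tpsi_t\chi_{B_t}$ to $\tg_{t_p}$; this is admissible since $B_t\subset\O_{t_p}$. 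Once every non-root node has been treated, testing $\int_\O\tg:\tvfi=0$ against $\tvfi\in\V$ and using (3) at every other node forces (3) at the root as well. A direct tracking of where mass originates confirms $\tg_t\equiv 0$ outside $\Gamma_{\tg}$.

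For the pointwise bounds, unfold the recursion: for each $t\in\Gamma_{\tg}$ the identity $\int_{B_t}\tpsi_t:\tvfi=\int_{\O_t}\tg_t:\tvfi$ telescopes to $\int_{W_t}\tg:\tvfi$. Taking $\tvfi=\tpsi_t$ and combining (i) the inverse estimate $\|\tpsi_t\|_{L^\infty(B_t)}\leq C_n|B_t|^{-1/2}\|\tpsi_t\|_{L^2(B_t)}$ on the finite-dimensional $\V$ (obtained by rescaling $B_t$ to the unit cube, under which $\V$ is invariant) with (ii) the containment $W_t\subset KQ_t$ from Lemma \ref{tcovJohn}, which gives $\|\tpsi_t\|_{L^\infty(W_t)}\leq C_nK\|\tpsi_t\|_{L^\infty(B_t)}$ since affine polynomials grow linearly in $\diam(W_t)/\diam(B_t)$, yields $\|\tpsi_t\|_{L^\infty(B_t)}\leq C_nK|B_t|^{-1}\|\tg\|_{L^1(W_t)}$. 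Using $|W_t|\leq C_nK^n|B_t|$ from the same lemma, this equals $C_nK^{n+1}T\|\tg\|_1(x)$ for $x\in B_t$, and the explicit formula $\tg_t(x)=\tg(x)\chi_{U_t}(x)-\tpsi_t(x)\chi_{B_t}(x)+\sum_{s:\,s_p=t}\tpsi_s(x)\chi_{B_s}(x)$ delivers (\ref{P12}) in the "$s=t$" and "$s_p=t$" cases and (\ref{P11}) in every other case. The main obstacle is the scaling-uniform inverse estimate on $\V$, which must produce constants depending only on $n$ so that the geometric factor $K^{n+1}$ coming from the John condition appears cleanly and isolated in the final pointwise bound; everything else reduces to bookkeeping on the tree $\Gamma_{\tg}$.
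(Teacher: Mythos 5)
Your construction is correct and is essentially the paper's: cut $\tg$ along the Whitney cubes, push a $\V$-valued correction supported on the interface cube $B_s$ from each node to its parent so that the transferred moment telescopes to an integral over the shadow $W_t$, and control it by $T\|\tg\|_1$ using $W_t\subseteq KQ_t$ and $|\O_t|\leq C_n|B_t|$; your single orthogonal projection onto $\V$ via the nondegenerate form $\int_{B_t}\tpsi:\tvfi$ is just a coordinate-free packaging of the paper's two-step biorthogonal bases $A_{i,s}$ and $\tt_{i,s}$. The only slip is cosmetic: the moment telescopes to $\int_{V_t}\tg:\tvfi$ with $V_t=\bigcup_{k\succeq t}U_k\subseteq W_t$ rather than to $\int_{W_t}\tg:\tvfi$, which changes nothing in the estimate.
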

\begin{proof} Let us define a base of the vector space $\V$. For the constant skew-symmetric matrices we consider
\begin{equation*}
(E_{ij})_{i'j'}=\left\{
  \begin{array}{r l}
     1 & \quad \text{if }(i',j')=(i,j)\\
     -1 & \quad \text{if }(i',j')=(j,i)\\
     0 & \quad \text{otherwise}\\
   \end{array} \right.
\end{equation*}
where $1\leq i<j\leq n$. It can  be seen that the dimension of $\V$ equals $n_0+n$, where $n_0:=\frac{n(n-1)}{2}+1$. Let us take the following basis  $\{A_1,\cdots,A_{n_0},H_1(x),\cdots,H_n(x)\}$ of $\V$, where the first $\frac{n(n-1)}{2}$ elements are the matrices with constant coefficients $E_{ij}$, following an arbitrary order, and $A_{n_0}=I$ is the identity matrix. The matrices $H_i(x)$ have been previously defined in (\ref{MatrHi}).

Now, let $\{\phi_t\}_{t\in\Gamma}$ be a partition of the unity subordinate to $\{\O_t\}_{t\in\Gamma}$. Namely, a collection of smooth functions such that $\sum_{t\in\Gamma} \phi_t=1$, $0\leq \phi_t\leq1$ and $supp(\phi_t)\subset\Omega_t$. Thus, $\tg$ can be cut-off into $\tg=\sum_{t\in\Gamma} \tf_t$ by taking $\tf_t(x)=\phi(x)\tg(x)$. Note that $\tf_t\equiv 0$ except for a finite number of $t\in\Gamma$. This decomposition verifies (1) and (2) in the statement of the lemma but probably it does not satisfy (3). Thus, we make some modifications to obtain the orthogonality with respect to $\V$. We construct the decomposition in two steps. We deal first with the orthogonality with respect to the matrices with constant coefficients $\{A_1,\cdots, A_{n_0}\}$ and later with  respect to $\{H_1(x),\cdots,H_n(x)\}$. 

{\it First step}: The decomposition in this first step is denoted with the upper index (0). Thus, we define the functions $A_{i,s}(x)$ as a sort of normalization of $A_i$ with respect to a certain inner product over $B_s$: 
\begin{equation*}
\begin{split}
A_{i,s}(x)&:=\frac{\chi_s(x)A_i}{2|B_s|}\hspace{1cm} \text{if }1\leq i\leq n_0-1\\
A_{n_0,s}(x)&:=\frac{\chi_s(x)A_{n_0}}{n|B_s|}=\frac{\chi_s(x)I}{n|B_s|},
\end{split}
\end{equation*}
where $\chi_s(x)$ is the characteristic function of $B_s$. Indeed, notice that $\int A_{i,s}(x):A_{j}\dx=\delta_{i,j}$ for all $s\in\Gamma\setminus\{a\}$ and  $1\leq i, j\leq n_0$, where $\delta_{i,j}$ is the Kronecker delta. 

Thus, we define the collection of functions $\{\tg^{(0)}_t\}_{t\in\Gamma}$ from $\Omega$ to $\R^{n\times n}$ by
\begin{align}\label{A-decomp}
\tg^{(0)}_t(x):=\tilde{f}_t(x)+\left(\sum_{s:\,s_p=t}\th^{(0)}_s(x)\right)-\th^{(0)}_t(x), 
\end{align}
where 
\begin{align}\label{h0}
\th^{(0)}_s(x):=\sum_{i=1}^{n_0}\left(\int_{W_s}A_i:\sum_{k\succeq s}\tf_k(y)\dy\right) A_{i,s}(x). 
\end{align}
The sum in (\ref{A-decomp}) is indexed over every $s\in\Gamma$ such that $t$ is the parent of $s$. In the particular case when $t$ is the root of $\Gamma$,  (\ref{A-decomp}) means 
\[\tg^{(0)}_a(x)=\tf_a(x)+\sum_{s:\,s_p=a}\th^{(0)}_s(x).\]

Notice that the functions $\th^{(0)}_s$ in (\ref{h0}) are well-defined because of the integrability of $\tg$. Indeed, 
\begin{align*}
\left|\int_{W_s}A_i:\sum_{k\succeq s}\tf_k(y)\dy\right|&\leq \|A_i\|_\infty\int_{W_s}\|\sum_{k\succeq s}\tf_k\|_1(y)\dy\\
&\leq \int_{W_s}\|\tg\|_1(y)\dy.
\end{align*}
See definitions of $\|\tg\|_r(y)$, for $1\leq r\leq \infty$, in Section \ref{Prel}.
Moreover, it can be easily observed that $\tf_s$, $\thc_s$ and $\tgc_s$ are identically zero for all $s\in\Gamma\setminus\Gamma_{\tg}$.
For this reason the sums indexed over subsets of $\Gamma$, for instance $k\succeq s$, that appear in the first step are finite. The finiteness of these sums is also verified in the second step.

We know that
\[supp(\th^{(0)}_s)\subset B_s\]
and the coefficients of $\th_s^{(0)}(x)$ are estimated in the following way:
\begin{align}\label{esths}
\|\th^{(0)}_s\|_\infty(x) \leq \tfrac{1}{2|B_s|} \chi_s(x)\int_{W_s}\|\tg\|_1(y)\dy=\tfrac{|W_s|}{2|B_s|} \chi_s(x) T\|\tg\|_1(x)
\end{align}
for all $x\in\Omega$. Thus, using (\ref{Boman tree}), if $x\in B_s$ where $s=t$ or $s_p=t$ then 
\begin{equation}\label{P02}
\begin{split}
\|\tg_t^{(0)}\|_\infty(x)&\leq \|\tg\|_\infty(x)+\tfrac{|W_s|}{2|B_s|}T\|\tg\|_1(x)\\
&\leq \|\tg\|_\infty(x)+K^n\,T\|\tg\|_1(x).
\end{split}
\end{equation}
Otherwise, if $x\not\in\bigcup_{s\in\Gamma} B_s$ or $x\in B_s$ with $s\neq t$ and $s_p\neq t$ then
\begin{align*}
\|\tg_t^{(0)}\|_\infty(x)\leq \|\tg\|_\infty(x).
\end{align*}

Let us continue by showing that $\tg(x)=\sum_{t\in\Gamma} \tg^{(0)}_t(x)$ for all $x$. Given $x\in \Omega$, let suppose that $x\notin \bigcup B_t$. Then $\tg_t^{(0)}(x)=\tg_t(x)$ for all $t\in\Gamma$, then \[\sum_{t\in\Gamma} \tg^{(0)}_t(x)=\sum_{t\in\Gamma} \tg_t(x)=\tg(x).\]
Otherwise, if $x$ belongs to $B_{k_0}$ for ${k_0}\in\Gamma$ it follows that $\tg_t^{(0)}(x)=\tg_t(x)$ for all $t\neq {k_0}$,  $t\neq {k_0}_p$ (${k_0}_p$ is the parent of $k_0$). Moreover, by using that the cubes $B_s$ are pairwise disjoint we have
 \begin{align*}
\tg^{(0)}_{{k_0}}(x)&=\tg_{k_0}(x)-\th^{(0)}_{k_0}(x)\\
\tg^{(0)}_{{k_0}_p}(x)&=\tg_{{k_0}_p}(x)+\th^{(0)}_{k_0}(x).
\end{align*}
Then, $\sum_{t\in\Gamma} \tg^{(0)}_t(x)=\tg(x)$ for all $x$.

Next, let us prove (2) in the statement of the lemma. The parent of each $s$ in the sum in (\ref{A-decomp}) is $t$, then $B_s\subseteq \Omega_s\cap\Omega_t$. Thus,  $supp(\tg^{(0)}_t)\subseteq\Omega_t$.

Now, let us show property (3), which refers to the orthogonality of $\tg^{(0)}_t$ with respect to the matrices $A_1,\cdots,A_{n_0}$ for all $t\in\Gamma$. Observe that $k\succeq t$ if and only if $k\succeq s$, with $s_p=t$, or $k=t$. Thus, given $1\leq j\leq n_0$
\begin{align*}
\int \th^{(0)}_t(x):A_j\dx&=\int A_j:\sum_{k\succeq t}\tf_k(y)\dy\\
&=\int A_j:\tf_t(y)\dy\ +\sum_{s:\,s_p=t}\int A_j:\sum_{k\succeq s}\tf_k(y)\dy\\
&=\int A_j:\tf_t(y)\dy\ +\sum_{s:\,s_p=t}\int \th^{(0)}_s(x):A_j\dx.
\end{align*}
Then, $\int \tg_t^{(0)}(x):A_j\dx=0$, for all $t\neq a$. 

Finally, 
\begin{align*}
\int \tg^{(0)}_a(x):A_j\dx&=\int \tf_a(x):A_j\dx+\sum_{s:\,s_p=a}\int A_j:\sum_{k\succeq s}\tf_k(y)\dy\\
&=\int A_j:\sum_{k\succeq a}\tf_k(y)\dy\\
&=\int A_j : \tg (x)\dx = 0.
\end{align*}

\bs

{\it Second step}: In this step the decomposition is denoted with the upper index (1). Now, we repeat the process used in the first step replacing the collection $\{\tf_t\}_{t\in\Gamma}$ by $\{\tg^{(0)}_t\}_{t\in\Gamma}$ and the matrices $A_1,\cdots,A_{n_0}$ by $H_1(x),\cdots,H_n(x)$. 

Given a cube $B_s$ in Definition \ref{OperT}, with $s\in\Gamma\setminus\{a\}$, and $1\leq i\leq n$ we define 
\begin{equation*}
\tt_{i,s}(x):=\dfrac{H_i(x-c_s)\chi_{s}(x)}{\int_{B_s}H_i(z-c_s):H_i(z-c_s)\dz},
\end{equation*}
where $c_s$ is the center of the cube $B_s$. Using the symmetries of the cubes $B_s$ which have sides parallel to the axis, it follows that $\int \tt_{s,i}(x):H_j(x)\dx=\delta_{i,j}$ for all $s\in\Gamma\setminus\{a\}$ and  $1\leq i, j\leq n$. Moreover, notice that $\int \tt_{s,i}(x):A_j(x)\dx=0$ for all $1\leq i\leq n$ and $1\leq j\leq n_0$. This property is basic to preserve the orthogonality obtained in the first step. 

We define the decomposition of $g$ in the following way:
\begin{align}\label{H-decomp}
\tg_t(x):=\tg^{(0)}_t(x)+\left(\sum_{s:\,s_p=t}\th^{(1)}_s(x)\right)-\th^{(1)}_t(x), 
\end{align}
where 
\begin{align}\label{h1}
\th^{(1)}_s(x):=\sum_{i=1}^{n}\left(\int_{W_s}H_i(y):\sum_{k\succeq s}\tg^{(0)}_k(y)\dy\right) \tt_{s,i}(x). 
\end{align} 
In the particular case when $t= a$,  (\ref{H-decomp}) means 

\begin{eqnarray*}
\tg_a(x)&:=&\tgc_a(x)+\sum_{s:\,s_p=a} \thu_s(x).
\end{eqnarray*}

As before, $\tg^{(0)}_s$ and $\th^{(1)}_s$ are identically zero if $s\in\Gamma\setminus\Gamma_{\tg}$ implying that $\tg_s$ is identically zero if $s\in\Gamma\setminus\Gamma_{\tg}$. 

In order to prove (\ref{P11}), notice that $supp(\thc_s)\subseteq B_s$ and $supp(\thu_s)\subseteq B_s$. Thus, from (\ref{H-decomp}) and (\ref{A-decomp}),  it follows that $\tg_t(x)=\tf_t(x)$ for all $x\not\in\bigcup_{s\in\Gamma} B_s$ or $x\in B_s$ with $s\neq t$ and $s_p\neq t$. Then, (\ref{P11}) is proved. 

Estimate (\ref{P12}) requires more work. Let us start by showing a pointwise estimate of $\|\thu_s\|_1(x)$ by the Hardy type operator $T$ on $\|\tg\|_1(x)$. First, notice that \[\|\tt_{s,i}\|_\infty(x)\leq \frac{6}{l_s^{n+1}}=\frac{6}{l_s|B_s|},\]
where $l_s$ is the side length of the cube $B_s$. Next, using the orthogonality of the collection $\{\tgc_k\}$ with respect to $A_1,\cdots,A_{n_0}$, we can conclude that \[\int_{W_s}H_i(y):\sum_{k\succeq s}\tg^{(0)}_k(y)\dy=\int_{W_s}H_i(y-c_s):\sum_{k\succeq s}\tg^{(0)}_k(y)\dy.\]
Thus, by replacing this new integral in definition (\ref{h1}) and using that $\|H_i\|_\infty(y-c_s)\leq \diam(W_s)$ for all $y\in W_s$, we have
\begin{align*}
\|\thu_s\|_\infty(x)&\leq\sum_{i=1}^{n}\|\tt_{s,i}\|_\infty(x)\int_{W_{s}}|H_i(y-c_s)|:|\sum_{k\succeq s}\tg^{(0)}_k(y)|\dy\\
&\leq \sum_{i=1}^{n} 6  \frac{\chi_s(x)}{l_s|B_s|}  \int_{W_{s}}\|H_i\|_\infty(y-c_s) \|\sum_{k\succeq s}\tg^{(0)}_k\|_1(y)\dy\\
&\leq \sum_{i=1}^{n} 6  \frac{\,\diam(W_s)}{l_s}\frac{\chi_s(x)}{|B_s|}  \int_{W_{s}}\|\sum_{k\succeq s}\tg^{(0)}_k\|_1(y)\dy\\
&= 6n \frac{\,\diam(W_s)}{l_s}  \frac{\chi_s(x)}{|B_s|}    \int_{W_{s}}\|\sum_{k\succeq s}\tg^{(0)}_k\|_1(y)\dy=(1)\\
\end{align*}
Now, it can be seen by using (\ref{A-decomp}) and certain telescopic cancelations of the functions $\thc_{k'}$ that 
\begin{align*}
\sum_{k\succeq s}\tgc_k=\sum_{k\succeq s}\left(\tilde{f}_k+\left(\sum_{k':\,k'_p=k}\th^{(0)}_{k'}\right)-\th^{(0)}_k\right)=\left(\sum_{k\succeq s}\tf_k\right)-\thc_s.
\end{align*}
Thus, from (\ref{esths}) it follows
\begin{align*}
(1)&= 6n \frac{\,\diam(W_s)}{l_s}  \frac{\chi_s(x)}{|B_s|}    \int_{W_{s}}\|\sum_{k\succeq s}\tf_k-\thc_s\|_1(y)\dy\\
&\leq 6n \frac{\,\diam(W_s)}{l_s}  \frac{\chi_s(x)}{|B_s|}    \int_{W_{s}}\|\sum_{k\succeq s}\tf_k\|_1(y)+\|\thc_s\|_1(y)\dy\\
&\leq 6n \frac{\,\diam(W_s)}{l_s}  \frac{\chi_s(x)}{|B_s|}    \int_{W_{s}}\|\tg\|_1(y)+\frac{n^2}{2|B_s|}\chi_s(y)\left(\int_{W_s}\|\tg\|_1(x)\dx\right)\dy\\
&= 6n\left(1+\frac{n^2}{2}\right) \frac{\,\diam(W_s)}{l_s}  \frac{|W_s|}{|B_s|}     \frac{\chi_s(x)}{|W_s|} \int_{W_{s}}\|\tg(y)\|_1\dy\\
&= C_n  \frac{\,\diam(W_s)}{l_s}  \frac{|W_s|}{|B_s|} T\| \tg\|_1(x).
\end{align*}
Hence, using (\ref{Boman tree}),
\begin{align}\label{P02+}
\|\thu_s\|_\infty(x)\leq C_n  K^{n+1} T\| \tg\|_1(x).
\end{align}
Finally, we have already mentioned that the functions $\thu_s$ and $\thu_t$ defined in  (\ref{H-decomp}) are supported, respectively, in the pairwise disjoint sets $B_s$ and $B_t$. Thus, combining (\ref{P02}) and (\ref{P02+}), we have that for any $x\in B_s$, where $s=t$ or $s_p=t$,
\begin{align*}
\|\tg_t\|_\infty(x)&\leq \|\tgc\|_\infty(x)+\|\thu_s\|_\infty(x)\\
&\leq \|\tg\|_\infty(x)+K^n\,T\|\tg\|_1(x)+C_n  K^{n+1} T\| \tg\|_1(x),
\end{align*}
proving (\ref{P12}). 

Properties $(1)$, $(2)$, and $(3)$ in the statement of this lemma follows by using the construction of the partition. Indeed, the first two properties follow by replacing $\tg_t$ by $\tg_t^{(0)}$  and $\tg^{(0)}_t$ by $\tg^{(1)}_t$ in the first step. 

The third property follows from the facts that $\int \th^{(1)}_s : A_i=0$ for all $s\in\Gamma$ and $1\leq i\leq n_0$, so we do not modify the orthogonality obtained in the previous step, and 
\[\int H_j(x) : \th^{(1)}_s(x) \,{\rm d}x=\int H_j(x) : \sum_{k\succeq s}\tg^{(0)}_k(x)\,{\rm d}x.\]
The rest of the proof follows by mimicking the first step.
\end{proof}

\begin{proof} [Proof of Lemma \ref{DecompSimple}]  As $\beta\geq 0$ and $\O$ is bounded, $L^q(\O,\Rnn,\rho^{-\beta})\subset L^1(\O,\Rnn)$. Thus, having just proved Lemma \ref{Decomp} we only need to show inequality (\ref{Estimation}). Hence, from (\ref{P12}) and (\ref{P11}), and the continuity of the operator $T$ stated in Lemma \ref{weighted T}, we have

\begin{align*}
\sum_{t\in\Gamma} \|\tg_t\|_{L^{q}(\O_t,\rho^{-\beta})}^q&=\sum_{t\in\Gamma} \int_{\O_t} \|\tg_t\|^q_q(x)\o^{-q}(x)\dx\\
&=C K^{q(n+1)}\sum_{t\in\Gamma} \int_{\O_t} \left(\|\tg\|^q_q(x)+(T\|\tg\|_1(x))^q\right)\o^{-q}(x)\dx\\
&=C K^{q(n+1)}\left(\|\tg\|^q_{L^q(\O,\rho^{-\beta})}+\|T\|\tg\|_1\|^q_{L^q(\O,\rho^{-\beta})}\right)\\
&=C K^{q(n+1+\beta)}\|\tg\|^q_{L^q(\O,\rho^{-\beta})}
\end{align*}
where $C$ is independent of $\O$.
\end{proof}

\section*{Acknowledgements}
The author thanks Marta Lewicka from University of Pittsburgh for bringing to his attention the version of Korn inequality studied in this work.

\end{document}